\title[KAM tori for the dissipative spin-orbit problem]{Efficient and
  accurate KAM tori construction for the dissipative spin-orbit
  problem using a map reduction}
\author[R. Calleja]{Renato Calleja}
\address{ Department of Mathematics and Mechanics, IIMAS, National
  Autonomous University of Mexico (UNAM), App. Postal 20-726,
  C.P. 0100, Mexico D.F. (Mexico)}
\email{celleja@mym.iimas.unam.mx}
\author[A. Celletti]{Alessandra Celletti}
\address{ Department of Mathematics, University of Rome Tor Vergata,
  Via della Ricerca Scientifica 1, 00133 Rome (Italy)}
\email{celletti@mat.uniroma2.it}
\author[J. Gimeno]{Joan Gimeno}
\address{ Department of Mathematics, University of Rome Tor Vergata,
  Via della Ricerca Scientifica 1, 00133 Rome (Italy)}
\email{gimeno@mat.uniroma2.it}
\author[R. de la Llave]{Rafael de la Llave}
\address{ School of Mathematics, Georgia Institute of Technology, 686
  Cherry St., Atlanta GA. 30332-0160 (USA) }
\email{rafael.delallave@math.gatech.edu}
\thanks{R.C. was partially supported by UNAM-DGAPA PAPIIT Project IN
  101020.  A.C. has been partially supported the MIUR Excellence
  Department Project awarded to the Department of Mathematics,
  University of Rome Tor Vergata, CUP E83C18000100006, EU H2020 MSCA
  ETN Stardust-Reloaded Grant Agreement 813644, and MIUR-PRIN
  20178CJA2B ``New Frontiers of Celestial Mechanics: theory and
  Applications''. J.G. has been supported by the Spanish grants
  PGC2018-100699-B-I00 (MCIU/AEI/FEDER, UE), the Catalan grant 2017
  SGR 1374 and MIUR-PRIN 20178CJA2B ``New Frontiers of Celestial
  Mechanics: theory and Applications''. The project leading to this
  application has also received funding from the European Union's
  Horizon 2020 research and innovation programme under the Marie
  Sk\l{}odowska-Curie grant agreement No 734557.  J.G. thanks the
  School of Mathematics of GT for its hospitality in Spring 2019 and
  Fall 2019. R.L has been supported by NSF grant DMS 1800241}
\date{\today}
\DeclareMathOperator{\Res}{Res}
\newcommand{\round}{\mathtt{round}}
\newcommand{\I}{\mathtt{i}}
\newcommand{\ve}[1]{\boldsymbol{#1}}
\newcommand{\eps}{\varepsilon}
\newcommand{\dis}{\eta}
\newcommand{\ecc}{e}
\newcommand{\omg}{\omega}
\newtheorem{thm}{Theorem}[section]
\newtheorem{meta-thm}[thm]{Meta-Theorem}
\newtheorem{cor}[thm]{Corollary}
\newtheorem{rem}[thm]{Remark}
\newtheorem{rems}[thm]{Remarks}
\newtheorem{defn}[thm]{Definition}
\newtheorem{lem}[thm]{Lemma}
\newtheorem{alg}[thm]{Algorithm}
\newcommand\beq[1]{ \begin{equation}\label{#1} }
\newcommand{\eeq}{ \end{equation} }
\newcommand\beqa[1]{ \begin{eqnarray} \label{#1}}
\newcommand{\eeqa}{ \end{eqnarray} }
\newcommand{\beqano}{ \begin{eqnarray*} }
\newcommand{\eeqano}{ \end{eqnarray*} }
\newcommand\equ[1]{{\rm (\ref{#1})}}
\newcommand{\R}{\mathbb{R}}
\newcommand{\Z}{\mathbb{Z}}
\newcommand{\T}{\mathbb{T}}
\newcommand{\M}{\mathcal{M}}
\begin{document}

\begin{abstract}
We consider the dissipative spin-orbit problem in Celestial Mechanics,
which describes the rotational motion of a triaxial satellite moving
on a Keplerian orbit subject to tidal forcing and \emph{drift}.

Our goal is to construct quasi-periodic solutions with fixed
frequency, satisfying appropriate conditions.

With the goal of applying rigorous KAM theory, we compute such
quasi-periodic solution with very high precision. To this end, we have
developed a very efficient algorithm.  The first step is to compute
very accurately the return map to a surface of section (using a high
order Taylor's method with extended precision).  Then, we find an
invariant curve for the return map using recent algorithms that take
advantage of the geometric features of the problem.  This method is
based on a rapidly convergent Newton's method which is guaranteed to
converge if the initial error is small enough. So, it is very suitable
for a continuation algorithm.

The resulting algorithm is quite efficient. We only need to deal with
a one dimensional function. If this function is discretized in $N$
points, the algorithm requires $O(N \log N) $ operations and $O(N) $
storage. The most costly step (the numerical integration of the
equation along a turn) is trivial to parallelize.

The main goal of the paper is to present the algorithms,
implementation details and several sample results of runs.

We also present both a rigorous and a numerical comparison of the
results of averaged and not averaged models.
\end{abstract}

\keywords{Spin-orbit problem $|$ Dissipation $|$ Conformally
symplectic systems $|$ Tidal torque $|$ Invariant curves}

\maketitle

\section{Introduction}
The construction of invariant structures in Celestial Mechanics and
Astrodynamics has become of great importance in recent times, both for
theoretical reasons and for the practical design of space missions.
At present, many space missions are based on the determination of
periodic and quasi-periodic orbits. Some notable examples of
periodic/quasi-periodic orbits used in mission design are Lyapunov,
Lissajous and halo orbits (see, e.g.,
\cite{CellettiPS2015,GomezM2001,JorbaM1999}).

The existence and persistence of quasi-periodic orbits was developed
by KAM theory (\cite{Kolmogorov54,Arnold63a,Moser62}).  Making KAM
theory into a practical tool is an ongoing and rapidly progressing
area, which applies to models of increasing complexity; it also leads
to applications and has uncovered new mathematical phenomena.

With these motivations, this work develops a method for the
construction of invariant tori in a concrete model of interest in
Celestial Mechanics, namely the dissipative spin-orbit problem (see
Section~\ref{sec:model}), which describes the rotation about its
center of an oblate moon orbiting a planet and subject to tidal
forces. Our goal is to develop a method to compute quasi-periodic
solutions in the spin-orbit problem. As it is well known,
quasi-periodic orbits can be described geometrically as invariant tori
on which the motion is conjugate to a rigid rotation. Hence, we will
use indistinctly the names quasi-periodic solution and invariant
(rotational) torus.

\subsection{Overview of the method}

The method we develop starts by constructing a surface of section and
a return map to it. The invariant tori for the flow correspond to
invariant tori for the return map.  We show that these return maps for
the spin-orbit have the remarkable property that they transform the
symplectic form into a multiple of itself.  These maps are called
\emph{conformally symplectic} systems and enjoy several remarkable
properties that lie at the root of a KAM theory and efficient
algorithms (see Section~\ref{sec:CS}).

To find the invariant torus of the map, we follow the approach in
\cite{CallejaCL11} and formulate a functional equation for the drift
parameter and for the embedding of the torus, whose solutions are
obtained formulating a quasi-Newton method that, given an approximate
solution of the functional equation, produces another one with a
quadratically small reminder. The quasi-Newton method in
\cite{CallejaCL11} takes advantage of the conformally symplectic
geometric property.

The results of \cite{CallejaCL11} guarantee that the method converges
(as a double exponential, as Newton's methods) if the initial error is
small enough (compared to some readily computable condition numbers).

The theorem in \cite{CallejaCL11} also shows that the difference
between the true solution and the initial approximation is controlled
by the error of the invariance equation, see
eq. \eqref{invariance}. Results of this form are called
\emph{a-posteriori theorems} in numerical analysis.  We also note that
one of the consequences of the work in \cite{CallejaCL11} is a local
uniqueness for the solutions of \eqref{invariance}, except for
composition for a rotation.  This lack of uniqueness comes from the
freedom on the choice of coordinates in the parameterization, but the
geometric object and the drift parameter are locally unique.

Since the iterative method converges for small enough error, it will
be used as the basis of a continuation method in parameters, which is
guaranteed to converge until the assumptions in the theorem fail.
Indeed, arguments in \cite{CallejaL10} show that the method can be
used as a practical way to compute the breakdown of the torus (see
\cite{CallejaC10} for an implementation to conformally symplectic
maps). Note that the method is backed up by theorems and guaranteed to
reach to the boundary of validity of the theorem, if given enough
computational resources.

In this paper we will implement the method with extended arithmetic
precision, motivated by the fact that the size of the error needed to
apply the a-posteriori theorem is typically smaller than what can be
obtained in double precision. With modern programming techniques,
writing extended precision programs is not much more time consuming
than using standard arithmetic.  Of course, there is a penalty in
speed, but since the algorithm is so efficient, one can still run
comfortably even with extended precisions in today's desktop machines
(see Section~\ref{sec:parallel} for details on timings and resources).
Of course, in continuations, it is also possible to run the first
iterations in double precision till the error is dominated by the
double precision round-off and then run the final iterations in
extended precision.

We have also used jet transport in order to get automatically the
(first order) variational flow with respect to initial conditions and
parameters. In \cite{CCGL20b} we use the jet transport to get high
order variational flows following the results in \cite{Gimeno2021}.

Finally, we have taken advantage of some modern advances such as
multicore machines and multithreading given, for instance,
continuation iterations in around 1 min when the initial guess is
small enough. We did not explore other advanced architectures such as
GPU, whose application in Celestial Mechanics is an interesting
challenge. Some work on a simpler problem is in \cite{KumarAL21c}.

\subsection{Efficiency and accuracy of the method}

The use of return maps is very economical and natural.  In the study
of invariant tori for differential equations, it is standard to
separate the directions along the flow and the transversal
directions. 

On the one hand, the torus remains very smooth along the directions
of the flow for all values of the perturbation parameter. The flow
in these directions can just be reduced to the well studied problem of
computing solutions of ODE's and, in fact, there are many different
algorithms suitable in different conditions.

On the other hand, the computation of the torus in the directions of
the section is a much more complicated problem, since it requires KAM
theory and the tori along these directions are much less regular;
indeed, for large enough values of the perturbation, the tori may
disappear.  Nevertheless, even for values of the perturbation
parameter where the tori do not exist, the solutions of the
differential equations can be comfortably computed.

By dividing the problem into the KAM part for maps and the propagation
to the return section, we reduce significantly the difficulty of the
KAM part, since the tori have lower dimension.  The computation of the
return map is more complicated, but it is easily parallelizable and
there are many studies on optimizing it.  Hence the break up ends
being rather advantageous. As indicated above, the methodologies of
the two parts are very different and each of them can be fine tuned
separately.

The KAM part for maps is well documented in \cite{CallejaCL11}; the
algorithm therein applies quasi-Newton corrections and takes advantage
of several identities related to the fact that the map is conformally
symplectic.

The number of explicit steps of the KAM iterative procedure is about a
dozen, see Algorithm~\ref{alg.newton}.  All the elementary steps are
well structured vector operations that are primitives in modern
languages or libraries, so that they are not too cumbersome to
program.

Quite remarkably, all the steps are diagonal either in a grid
representation of the function or in a Fourier representation. Of
course, we can switch from one representation to the other using
FFT. Hence, for a function discretized in $N$ modes, the quadratically
convergent method requires only $O(N)$ storage and $O(N \log N)$
operations.  Note that, in modern computers, the vector operations and
the FFT are highly optimized, with specialized hardware.

These KAM algorithms have been implemented for maps given by explicit
simple formulas \cite{CallejaC10, CallejaF2012, CallejaCL20}. In
theory, the only thing that one would need to do is to use the return
map of the ODE (and its variational equations) in place of the
explicit formulas.  However, as we report in \cite{CCGL20c}, in
contrast with the explicit formulas that have few important harmonics,
the return maps have many more relevant harmonics; this requires some
adaptations and the phenomena observed are different.

\subsubsection{Relation with other methods}

The methods of computing invariant tori based on normal form theory
require working with functions with as many variables as the phase
space, see \cite{StefanelliL12,StefanelliL15}. In contrast, our
methods require to manipulate only functions with as many variables as
the dimension of the tori of the map. Reducing the number of variables
in the unknown function is very important, since the number of
operations needed to manipulate a function grows exponentially (with a
large exponent) with the number of variables.  Some recent papers that
are also using return maps in Celestial Mechanics are, for instance,
\cite{HaroM21} (full dimensional tori in Hamiltonian systems) and
\cite{KumarAL21b} (whiskered tori, their stable and unstable manifolds
and their intersections in Hamiltonian systems).

It is interesting to compare the methods developed here to
\cite{Olikara16}, which uses a discretization of the tori without
separating the tori and the flow directions.  If the torus is
discretized in $N$ points, this method requires $O(N^2)$ storage and
$O(N^3)$ operations. Notice that $N$ points in 2-D tori give more or
less the same precision as $N^{\frac{1}{2}}$ in 1-D tori.

A curious remark is that the linearization of the invariance equation
\eqref{invariance} has a spectrum lying in circles as shown by
\cite{Mather68} (see \cite{AdomaitisKL07, HaroL2} for numerical
experimentations), so that the Arnold-Kyrlov methods, successful in
other models (\cite{Simo2010}), do not work very well. The method we
use can be understdood as saying that, using geometric identities, we
get the linearized equations to become constants (this phenomenon is
called \emph{automatic reducibility}).

One improvement in continuation methods, after an expensive effort in
one step, is that one can compute inverses (\cite{Moser73,Hald75}) or
diagonalizations (\cite{HLlnum, JorbaO09}), perturbatively. These
methods require still to store $O(N^2)$ storage and the perturbative
calculations still require $O(N^3)$ operations even if the constants
improve.

\subsection{The model}

We have implemented our results to the so-called dissipative
spin-orbit model (\cite{Celletti2010}). In this section, we will
review the physical bases of the model as well as formulate several
variants (so-called, time-dependent friction \eqref{eq.spinxy} and
averaged friction \eqref{eq.avg}).  These models will be analyzed
(numerically and rigorously) in subsquent sections.

The spin-orbit model describes the rotational motion of a triaxial
non-rigid satellite whose center of mass moves along an elliptic
Keplerian orbit around a central planet.  The spin-axis of the
satellite is assumed to be perpendicular to the orbital plane and
coinciding with the shortest physical axis. The rotation angle of the
satellite is the angle between the longest axis of the satellite and a
fixed direction, e.g. the periapsis line.

The motion of the rotation angle satisfies a second order differential
equation depending periodically on time, through the orbital elements
describing the osculating position of the center of mass of the
satellite; such equation depends on two parameters, namely the orbital
eccentricity and the equatorial flattening of the satellite.  The
model equations include a dissipative term due to the non-rigidity of
the satellite, since the rotation gives rise to tides that dissipate
energy and generate a torque.  We adopt the model of \cite{Peale} in
which the tidal torque is proportional to the angular velocity with a
time-periodic coefficient. The dissipative term depends on two
parameters: the orbital eccentricity and the dissipative factor, which
is determined by the physical features of the satellite.

For typical bodies of the solar system, e.g. the Moon and many others
among the biggest satellites, the force induced by the dissipation is
much smaller than the conservative part, so that dissipation can be
ignored in the description over short times.  Nevertheless, since
the dissipative forces have consequences that accumulate over time,
they are very important in the description of long-term effects.

In the applied literature, it is very common to use a simplified
version of the tidal torque that can be obtained by averaging the
dissipation over time (\cite{ARMA,CellettiL2014,LaskarC}), so that the
tidal torque becomes proportional to the derivative of the rotation
angle.

One of the advantages of the method in this paper is that it provides
a rather general rigorous justification of the averaging method for
quasi-periodic solutions The basic idea is very simple: using standard
averaging methods, we control the $2\pi$ time map and, then, the
effect of changing the map is controlled by the a-posteriori theorem
(see Appendix~\ref{sec:averaging}). For the spin-orbit problem we also
provide another justification that applies to all solutions.

Our formalism provides also rigorous estimates on the validity of the
averaging approximation. Standard averaging theory can give estimates
on the difference between the return maps in the averaged model and
the true model (notice that the time of flights in return maps is
about $1$, so that controlling the averaging method to order $1$ is
very standard). Then, we can use the a-posteriori format of the KAM
theory in \cite{CallejaCL11} to obtain estimates on the difference
between the KAM tori and the drift parameters in the averaged and
non-averaged cases. This result may look surprising, since one obtains
control on solutions (and drift) for very long times. Besides this
very general perturbative argument, in Appendix~\ref{sec:averaging} we
present some elementary arguments that, taking advantage of the
structure of the system, obtain non-perturbative results.

We conclude by mentioning that the current work has several
consequences: in \cite{CCGL20b} we study the quantitative verification
of a-posteriori theorems and the quantitative condition numbers, while
in \cite{CCGL20c} we explore numerically the boundary of validity of
KAM theory and uncover several phenomena that deserve further
mathematical investigation. We hope that this paper (and the
companions \cite{CCGL20b} and \cite{CCGL20c}) can stimulate further
research, for example turning the estimates in \cite{CCGL20b} into
rigorous computer assisted proofs, studying higher dimensional models,
incorporating more advanced computer architectures and explaining the
phenomena at breakdown.

\subsection{Organization of this paper}

This work is organized as follows.  In Section~\ref{sec:model} we
present the spin-orbit model with tidal torque.  The numerical
formulation of the spin-orbit problem is given in
Section~\ref{sec:numerical}, while the spin-orbit map is derived in
Section~\ref{sec:spin-map}. The algorithm for the construction of
invariant attractors and its applications is presented in
Section~\ref{sec:attractors}. Finally, some conclusions are given in
Section~\ref{sec:conclusions}.

\section{The spin-orbit problem with tidal torque}\label{sec:model}

Consider the motion of a rigid body, say a satellite $\mathcal{S}$,
with a triaxial structure, rotating around an internal spin-axis and,
at the same time, orbiting under the gravitational influence of a
point-mass perturber, say a planet $\mathcal{P}$.  A simple model that
describes the coupling between the rotation and the revolution of the
satellite goes under the name of \sl spin-orbit problem, \rm which has
been extensively studied in the literature in different contexts (see,
e.g., \cite{Beletsky,Celletti1990,Celletti90II,LaskarC,Wisdom}).  This
model is based on some assumptions that we are going to formulate as
follows. Let $\mathcal{A} < \mathcal{B} < \mathcal{C}$ denote the
principal moments of inertia of the satellite $\mathcal{S}$; then, we
assume that:
\begin{enumerate}
\renewcommand*{\theenumi}{\bf H\arabic{enumi}}
\renewcommand*{\labelenumi}{\theenumi.}
\setlength{\itemsep}{.5em}
 \item \label{assump1} The satellite $\mathcal{S}$ moves on an
   elliptic Keplerian orbit with semimajor axis $a$ and eccentricity
   $\ecc$, and with the planet $\mathcal{P}$ in one focus;
 \item \label{assump2} The spin-axis of the satellite coincides with
   the smallest physical axis of the ellipsoid, namely the axis with
   associated moment of inertia $\mathcal{C}$;
 \item \label{assump3} The spin-axis is assumed to be perpendicular to
   the orbital plane;
 \item \label{assump4} The satellite $\mathcal{S}$ is affected by a
   tidal torque, since it is assumed to be non-rigid.
\end{enumerate}
We adopt the units of measure of time such that the orbital period,
say $T_{orb}$, is equal to $2\pi$, which implies that the mean motion
$n=2\pi/T_{orb}$ is equal to one.

We define the equatorial ellipticity as the parameter $\eps>0$ given
by
\begin{equation}
 \label{eq.eps}
 \eps = \frac{3}{2} \frac{\mathcal{B}-\mathcal{A}}{\mathcal{C}}\ ,
\end{equation}
which is a measure of the oblateness of the satellite.  When $\eps =
0$, then $\mathcal{A}=\mathcal{B}$ which means that the satellite is
symmetric in the equatorial plane and, because of \eqref{assump3}, it
coincides with the orbital plane.

\vskip.1in

We consider the perturber $\mathcal{P}$ at the origin of an inertial
reference frame with the horizontal axis coinciding with the direction
of the semimajor axis.  It is convenient to identify the orbital plane
with the complex plane $\mathbb{C}$. The location of the center of
mass of the rigid body $\mathcal{S}$ with respect to the perturber
$\mathcal{P}$ is given, in exponential form, by $r \exp(\I f) \in
\mathbb{C}$, where $r > 0$ and $f$ are real functions depending on the
time $t$ and they represent respectively the \emph{instantaneous
  orbital radius} and the \emph{true anomaly} of the Keplerian orbits.
Indeed, over time, $r$ and $f$ describe an ellipse of eccentricity
$\ecc \in [0,1)$, semimajor axis $a$ and focus at the origin, see
  Figure~\ref{fig.spin}.

\vskip.1in

Given that the mean motion $n$ has been normalized to one, then the
\emph{mean anomaly} coincides with the time $t$. By Kepler's equation
(\cite{Celletti2010}), we have the following relation between the
\emph{eccentric anomaly} $u$ and the time:
\begin{equation}
 \label{eq.tu}
 t = u - \ecc \sin u\ .
\end{equation}
The expressions which relate $r$ and $f$ with the eccentric anomaly
(and hence with time through \eqref{eq.tu}) are given by
\begin{align}
 \label{eq.r}
 r &= a(1 - \ecc \cos u)\ , \\
 \label{eq.f}
 r \exp(\I f) &= a(\cos u - \ecc + \I \sqrt{1 - \ecc ^2} \sin u)\ .
\end{align}
Notice that we are assuming in \eqref{eq.f} that for $t=0$, $f(0) =
u(0) = 0$, and consequently, $f(\pi) = u(\pi) = \pi $ when $t = \pi$.
We also recall the following relations between the Keplerian elements,
that will be useful in the following:
\begin{equation}
 \label{eq.cfsf}
 \cos f = \frac{\cos u - \ecc}{1 - \ecc \cos u} \qquad \text{and}
 \qquad \sin f = \frac{\sqrt{1 - \ecc ^2} \sin u}{1 - \ecc \cos u}\ .
\end{equation}

As for the rotational motion, let $x$ be the angle formed by the
direction of the largest physical axis, which belongs to the orbital
plane, due to the assumptions \eqref{assump2} and \eqref{assump3},
with the horizontal (or semimajor) axis $a$.\\

If we neglect dissipative forces, the equation of motion which gives
the dependence of $x$ on time is given by the following expression
(\cite{Celletti2010}) to which we refer as the \sl conservative
spin-orbit equation: \rm
\begin{equation}
 \label{eq.spin-cons}
 \frac{d^2x(t)}{dt^2} + \eps \biggl(\frac{a}{r(t)}\biggr)^3 \sin
 \bigl(2 x(t) - 2f(t)\bigr) = 0\ ,
\end{equation}
where $\eps > 0$ is given in \eqref{eq.eps}, $r(t)=r(u(t);\ecc)$ in
\eqref{eq.r}, $f(t)=f(u(t);\ecc)$ in \eqref{eq.cfsf}, and where $u$ is
related to $t$ through \equ{eq.tu}.

\vskip.1in

\begin{figure}[ht]
 \includegraphics[scale=.8]{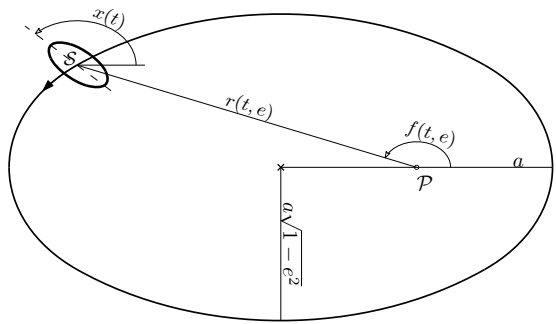}
 \caption{The spin-orbit problem: a triaxial satellite $\mathcal{S}$
   moves around a planet $\mathcal{P}$ on an elliptic orbit with
   semimajor axis $a$ and eccentricity $e$. The position of the
   barycenter of $\mathcal{S}$ is given by the orbital radius $r$ and
   the true anomaly $f$.  The rotational angle is denoted by $x$.}
 \label{fig.spin}
\end{figure}

If we now assume that the satellite is not rigid, then we must
consider a tidal torque, say $\mathcal{T}_d$, that acts on the
satellite. According to \cite{Macdonald,Peale}, we can write the tidal
torque as a linear function of the velocity:
\begin{equation}
 \label{eq.mcdonald}
 \mathcal{T} _d \biggl(\frac{dx(t)}{dt}, t\biggr) = - \dis
 \biggl(\frac{a}{r(t)}\biggr)^6 \biggl(\frac{dx(t)}{dt} - \frac{d f(t)}{dt}
 \biggr)\ ,
\end{equation}
where $\dis > 0$ is named the \sl dissipative constant. \rm Since we
are interested in astronomical applications, we specify that $\dis$
depends on the physical and orbital features of the body and takes the
form
\[
\dis = 3n\ \frac{k_2}{\xi Q} \biggl(\frac{R_e}{a}\biggr)^3
     \frac M m\ ,
\]
where $k_2$ is the second degree potential \sl Love number \rm
(depending on the structure of the body), $Q$ is the so--called \sl
quality factor \rm (which compares the frequency of oscillation of the
system to the rate of dissipation of energy), $\xi$ is a structure
constant such that $C=\xi mR_e^2$, $R_e$ is the equatorial radius, $M$
is the mass of the central body $\mathcal{P}$, $m$ is the mass of the
satellite $\mathcal{S}$. Astronomical observations suggest that for
bodies like the Moon or Mercury the dissipative constant $\dis$ is of
the order of $10^{-8}$.

\vskip.1in

The dynamics including the tidal torque is then described by the
following equation to which we refer as the \sl dissipative spin-orbit
equation: \rm
\begin{equation}
 \label{eq.spinxy}
 \frac{d^2x(t)}{dt^2} + \eps \biggl(\frac{a}{r(t)}\biggr)^3 \sin \bigl(2 x(t) -
 2f(t)\bigr) = - \dis \biggl(\frac{a}{r(t)}\biggr)^6 \biggl(\frac{dx(t)}{dt} -
 \frac{d f}{dt} \biggr)\ .
\end{equation}
The expression for the tidal torque can be simplified by assuming (as
in \cite{Peale,LaskarC}) that the dynamics is essentially ruled by the
average $\overline{\mathcal{T}}_d$ of the tidal torque over one
orbital period, which can be written as
\begin{equation}
 \label{eq.avg}
\overline{\mathcal{T}}_d \biggl(\frac{dx}{dt} \biggr) = -\dis
\biggl(\bar L(\ecc)\frac{dx}{dt} -\bar N(\ecc)\biggr) \ ,
\end{equation}
where (compare with \cite{Peale})
\begin{equation*}
 \begin{split}
\bar L(\ecc)&\equiv {1\over{(1-\ecc^2)^{9/2}}}
\biggl(1+3\ecc^2+{3\over 8}\ecc^4 \biggr) \ , \\ \bar N(\ecc)&\equiv
      \frac{1}{(1-\ecc^2)^6} \biggl(1+{{15}\over 2}\ecc^2+{{45}\over
        8}\ecc^4+ {5\over {16}}\ecc^6 \biggr)\ .
 \end{split}
\end{equation*}
When considering the averaged tidal torque, one is led to study the
following equation of motion to which we refer as the \sl averaged
dissipative spin--orbit equation: \rm
\begin{equation}
 \label{eq.avg-spinxy}
\frac{d^2x(t)}{dt^2} + \eps \biggl({a\over r(t)}\biggr)^3 \sin
\bigl(2x(t)-2f(t) \bigr) = -\dis \biggl(\bar
L(\ecc)\frac{dx(t)}{dt}-\bar N(\ecc)\biggr)\ .
\end{equation}

Note that in this model, we consider the average of the tidal forces,
but do not average the conservative forces (see
Appendix~\ref{sec:averaging}). This is justified because, as indicated
before, in practical problems, the dissipative forces are much smaller
than the conservative ones.

\begin{rems}
\begin{enumerate}
\renewcommand*{\theenumi}{\emph{\roman{enumi}}}
\renewcommand*{\labelenumi}{(\theenumi)}
 \item The parameter $\eps$ in \eqref{eq.eps} is zero only in the case
   of an equatorial symmetry with $\mathcal{A} = \mathcal{B}$. In that
   case, the equation of motion \eqref{eq.spin-cons} is trivially
   integrable.
 \item When $\ecc =0$, the orbit is circular and therefore $r = a$ and
   $f = t + t _0$.  Also in this case the equation of motion
   \eqref{eq.spin-cons} is integrable.
 \item The equation \eqref{eq.spin-cons} is associated to the
   following one-dimensional, time-dependent Hamiltonian function:
 \begin{equation}
 \label{eq.hamilto}
  \mathcal{H}(y,x,t) = \frac{y^2}{2} - \frac{\varepsilon}{2}
  \Big(\frac{a}{r(t)}\Big)^3 \cos(2x - 2f(t))\ .
 \end{equation}
 \item Equations \eqref{eq.spin-cons}, \eqref{eq.spinxy} and
   \eqref{eq.avg-spinxy} are defined in a phase space which is a
   subset of $[0,2\pi)\times \mathbb{R}$. Such a phase space can be
     endowed with the standard scalar product and a symplectic form
     $\Omega$, which in our case, is just the two dimensional area in
     phase space.  Even if in two dimensional phase spaces the area
     (volume) is the same as the symplectic manifold, in systems with
     $N$ degrees of freedom ($N > 1$) the preservation of the two-form
     $\Omega$ is much more stringent than the preservation of the $2N$
     dimensional volume.

\end{enumerate}
\end{rems}

\section{Numerical version of the spin-orbit problem}
\label{sec:numerical}

To get a numerical representation of the ordinary differential
equation \equ{eq.spinxy} (equivalently \equ{eq.spin-cons} or
\equ{eq.avg-spinxy}), it is convenient to express the equation in
terms either of the eccentric anomaly $u$ or the mean anomaly which
coincides with $t$. Although there is a clear bijection between $t$
and $u$ through \eqref{eq.tu}, it seems reasonable to redefine
everything in terms of the eccentric anomaly $u$ due to the
expressions of $r$ in \eqref{eq.r} and $f$ in \eqref{eq.f}. The
procedure to get the equation of motion with $u$ as independent
variable is the following.

The expression of $f$ in \eqref{eq.cfsf} is given easily in terms of
$u$ (and $\ecc$). Let $s(x)=s(x;u,\ecc)$ be the function defined as
$s(x;u,\ecc)\coloneq \sin(2x(t)-2f(t))$, where the dependence on $u$,
$\ecc$ enters through $f$.  Using trigonometric identities, we have an
explicit expression in terms of $u$ and $\ecc$ for the sinus in
\eqref{eq.spinxy}:
\begin{equation}
 \label{eq.sx}
s(x; u, \ecc) = \sin(2x) (2\cos^2 f-1) - \cos(2x)2\cos f \sin f\ .
\end{equation}
Note here the useful relation for the derivatives of $s,c$:
\begin{equation}
  \begin{split}
& \frac{\partial s}{\partial x}(x; u, \ecc) = 2c(x; u, \ecc) \ , \\
    &  \frac{\partial c}{\partial x}(x; u, \ecc) = -2 s (x; u, \ecc) \ ,
\end{split}
\end{equation}
where
\begin{equation}
 \label{eq.cx}
 c(x) = c(x; u, \ecc) \coloneq \cos(2x) (2\cos^2 f-1) + \sin(2x)2\cos f
 \sin f\ .
\end{equation}

The time-change given by \eqref{eq.tu} leads to
\begin{equation*}
 \frac{d f}{dt} = \left(\frac{a}{r}\right)^2 \sqrt{1 - \ecc ^2}\ .
\end{equation*}
As a consequence, equation \eqref{eq.spinxy} can be expressed in terms
of the independent variable $u$ as
\begin{equation}
 \label{eq.spinbg}
 \frac{d^2 \beta(u)}{du ^2} - \frac{d\beta(u)}{du} \frac{a}{r(u)} \ecc
 \sin u + \eps \frac{a}{r(u)} s(\beta) = - \dis \biggl(\frac{a}{r(u)}\biggr)^5
 \biggl(\frac{d\beta(u)}{du} - \frac{a}{r(u)} \sqrt{1 - \ecc ^2}\biggr)
\end{equation}
with $r$ defined in \eqref{eq.r} and $s(\beta) \coloneq s(\beta; u,
\ecc)$ given in \eqref{eq.sx}\footnote{We abuse the
  notation referring $r(t) = a(1 - \ecc \cos u(t))$ and $r(u) = a (1 -
  \ecc \cos u)$ as equal (similarly $f(t)$ and $f(u)$).  Although
  formally we should label them differently, they are equivalent via the
  Kepler's equation.}.

Note that the introduction of $u$ as independent variable implies a
non-constant deformation in the angular component. More precisely, if
$y(t) = \frac{dx(t)}{dt}$, defining
\begin{equation}
 \label{eq.x2b}
 \beta (u) \coloneq x (u - \ecc \sin u)\ ,
\end{equation}
we obtain
\begin{equation*}
 \gamma (u) \coloneq  \frac{d\beta(u)}{d u} = \frac{r(u)}{a}y(u - \ecc
 \sin u)\ .
\end{equation*}

The ODE system \eqref{eq.spinbg} can now be integrated by any
classical numerical integrator (\cite{HairerNW1993}) without having to
solve the Kepler's equation \eqref{eq.tu} at each integration step. In
the following sections, we will use a Taylor's integrator
(\cite{JorbaZ2005}) that we briefly recall for self-consistency in
Appendix~\ref{sec.taylorint}. We have used Taylor's method because it
can produce solutions with high accuracy (say $10^{-30}$), since it
can easily increase the orders of the Taylor's expansions in order to
provide good enough trajectory values. These integrators are also used
to produce rigorous enclosures (\cite{BertzM1998}). Both features seem
to be important towards the goal of producing computer-assisted proofs
(progress towards this goal will be reported in \cite{CCGL20b}) or in
the study of phenomena at breakdown that require delicate calculations
not easy to make convincing (progress towards this goal will be
reported in \cite{CCGL20c}).

Even if not directly used in the present paper, we report in
Appendix~\ref{sec:variational} the variational equations associated to
\equ{eq.spinbg}. The variational equations with respect to coordinates
and parameters can be used for different purposes (see \cite{CCGL20b},
\cite{CCGL20c}), e.g. to compute the parameterization of invariant
structures, to get estimates based on the derivative of the flow, to
compute chaos indicators like the Fast Lyapunov Indicator
(\cite{froes}).

\section{The conformally symplectic spin-orbit map}
\label{sec:spin-map}

In this section, we introduce the notion of conformally symplectic
systems (Section~\ref{sec:CS}), we reduce the study of the spin-orbit
problem to a discrete map (Section~\ref{sec:map}) and we provide an
explicit expression of the conformally symplectic factor
(Section~\ref{sec:factor}).

\subsection{Conformally symplectic systems}\label{sec:CS}
Conformally symplectic systems are dissipative systems that enjoy the
remarkable property that they transform the symplectic form into a
multiple of itself.

The formal definition for $2n$-dimensional discrete and continuous
systems is the following.

\vskip.1in

Let $\mathcal{M} = U\times \mathbb{T}^n$ be the phase space with
$U\subseteq \mathbb{R}^n$ an open and simply connected domain with
smooth boundary. We endow the phase space $\mathcal{M}$ with the
standard scalar product and a symplectic form $\Omega$, represented by
a matrix $J$ at the point ${\underline z}$ acting on vectors
${\underline u},{\underline v}\in \mathbb{R}^{2n}$ as
$\Omega_{\underline z}({\underline u}, {\underline v}) = ({\underline
  u}, J({\underline z}){\underline v})$.

For the spin-orbit model, the symplectic form $\Omega$ is represented
by the constant matrix $J$ which takes the form
\begin{equation}
 \label{matrixJ}
 J =
 \begin{pmatrix}
  0 & 1 \\ -1 & 0
 \end{pmatrix}\ .
\end{equation}

\begin{defn}\label{def:conformallysymplectic}
A diffeomorphism $f$ on $\mathcal M$ is conformally symplectic if, and
only if, there exists a function $\lambda \colon
\mathcal{M}\to\mathbb{R}$ such that
\begin{equation}\label{CS}
f^* \Omega = \lambda\Omega\ ,
\end{equation}
where $f^*$ denotes the pull--back of $f$ (i.e., $f^*\Omega = \Omega
\circ f$) and $\lambda$ is called the conformal factor.
\end{defn}

In the following, we will consider a family $f _\mu \colon {\mathcal
  M}\rightarrow{\mathcal M}$ of mappings and we will call $\mu\in
{\mathbb R}$ the \emph{drift} parameter. Correspondingly, we will
replace \eqref{CS} by
\begin{equation*}
f_\mu^* \Omega = \lambda\Omega\ .
\end{equation*}

\vskip.1in

We notice that for $\lambda=1$, we recover the symplectic case.
Moreover, as remarked in \cite{CallejaCL11}, for $n=1$ any
diffeomorphism is conformally symplectic with a conformal factor that
might depend on the coordinates; in particular, when $\Omega$ is the
standard area, one has either $\lambda(x)=|\det(Df _\mu(x))|$ or
$\lambda(x)=-|\det(D f _\mu(x))|$.  When $n\ge 2$, it follows that
$\lambda$ is a constant (see, e.g., \cite{Banyaga02,CallejaCL11}).

\vskip.1in

Definition~\ref{def:conformallysymplectic} extends to continuous
systems by the use of the \emph{Lie derivative}.

\begin{defn}
We say that a vector field $X$ is a conformally symplectic flow if,
and only if, denoting by $L _X$ the Lie derivative, there exists a
function $\mu \colon \mathbb{R}^{2n} \rightarrow \mathbb{R}$ such that
\beq{flow} L_X\Omega = \mu \Omega\ .  \eeq
\end{defn}

\vskip.1in

Denoting by $\Phi_t$ the flow at time $t$ of the vector field $X$, we
observe that \eqref{flow} implies that
\[
(\Phi_t)^*\Omega=e^{\mu t}\Omega\ .
\]

\vskip.1in

In our applications we will consider a family of vector fields $X
_\sigma$ depending on a drift parameter $\sigma \in \R$ and we will
replace \equ{flow} by
\[
L_{X _\sigma}\Omega = \mu \Omega\ .
\]

In our applications we will also have to consider time-dependent
vector fields. A time dependent vector field $X(t)$ is conformally
symplectic when $L_{X(t) }\Omega = \mu(t) \Omega$.  It is not
difficult to show that $\Phi_a^b$, the diffeomorphism which takes
initial conditions at time $a$ to the position at time $b$ (hence
$\Phi_a^c = \Phi_b^c \circ \Phi_a^b$), satisfies
\begin{equation}\label{average}
(\Phi_a^b)^*\Omega  = e^{\int_a^b \mu(s) \, ds} \Omega \ .
\end{equation}
Note that \eqref{average} implies that if $X(t)$ is periodic of period
$T$, the conformal factor of $\Phi_a^{a+T}$ is independent of $a$ and
it is equal to the conformal factor of a vector field with a constant
$\bar{\mu} = \frac{1}{T} \int_a^{a+T} \mu(s) \, ds$.  This will be
useful in the justification of averaging (see
Appendix~\ref{sec:averaging}).

\vskip.1in

The spin-orbit models described by \eqref{eq.spinxy} and
\eqref{eq.avg-spinxy} are both conformally symplectic. Let us start to
show that the averaged system \eqref{eq.avg-spinxy} is conformally
symplectic. We write \eqref{eq.avg-spinxy} as the first order system
\begin{align*}
 \dot x&= y\\
 \dot y&= -\eps\Big(\frac{a}{r}\Big)^3\ \sin(2x-2f)-\mu (\dot x-\upsilon)\ ,
\end{align*}
  where $\mu = \dis\bar L(\ecc)$ and $ \upsilon = \frac{\bar
    N(\ecc)}{\bar L(\ecc)}$.  Hence, $\mu$ is the conformal factor
  and $ \upsilon $ is the drift parameter.  Denoting by $i_X$ the interior
  product and recalling that $\Omega=dy\wedge dx$, we have
\[
i_X\Omega=\dot y dx-\dot x dy=\Big[-\mu
  (y-\upsilon)-\varepsilon\Big(\frac{a}{r}\Big)^3\ \sin(2x-2f)\Big]\,
dx-y\, dy\ .
\]
Then, we have
\[
d(i_X\Omega)=-\mu\, dy\wedge dx=-\mu\Omega\ .
\]
Since
\[
L_X\Omega=i_X\, d\Omega+d(i_X\Omega)=d(i_X\Omega)\ ,
\]
we conclude that
\[
L_X\Omega=-\mu\Omega\ .
\]
A similar computation shows that the model described by
\eqref{eq.spinxy} is conformally symplectic.


\subsection{The spin-orbit map}\label{sec:map}
We introduce the Poincar\'e map associated to \equ{eq.spinxy} or
\equ{eq.avg-spinxy}, which allows one to reduce the continuous
spin-orbit problem to a discrete system. We denote by $G _\ecc$ the
flow at time $2\pi$ in the independent variable $u$ associated to the
equation of motion \equ{eq.spinbg} in the coordinates
$(\beta,\gamma)$:
\begin{equation}
 \label{eq.qe}
G _\ecc(\beta_0,\gamma_0)=
\begin{pmatrix}
 \beta(2\pi;\beta_0,\gamma_0,\ecc) \\
 \gamma(2\pi;\beta_0,\gamma_0,\ecc)
\end{pmatrix}
\end{equation}
with $\beta(2\pi;\beta_0,\gamma_0,\ecc) $ and $\gamma(2\pi;
\beta_0,\gamma_0,\ecc)$ denoting the solution at $u=2\pi$ with initial
conditions $(\beta _0,\gamma _0)$ at $u = 0$. If we set $G _\ecc =
(G^{(1)} _\ecc, G^{(2)} _\ecc)$, then the Poincar\'e map associated to
\equ{eq.spinbg} is
\begin{align*}
\bar\beta &= G^{(1)}_\ecc(\beta,\gamma) \\
\bar\gamma &= G^{(2)}_\ecc(\beta,\gamma)\ .
\end{align*}
The Poincar\'e map $P_\ecc$ at time $t = 2\pi$ associated to
\equ{eq.spinxy} is then given by the conjugacy
\begin{equation}
 \label{eq.pe}
 P _\ecc=\Psi _\ecc^{-1}\circ G _\ecc \circ \Psi _\ecc
\end{equation}
with the (time) change of coordinates from $(x,y)/(2\pi)$ to
$(\beta, \gamma)$ given by
\begin{equation}
 \label{eq.xy2bg}
 \Psi _\ecc = 2\pi\  \begin{pmatrix}
  1 & 0 \\ 0 & 1-\ecc
 \end{pmatrix}\ .
\end{equation}
Using the map $G _\ecc$ is very advantageous in numerical numerical
implementation because this allows us to avoid to dealing with the
Kepler's equation \eqref{eq.tu}.  On the other hand, the map $P _\ecc$
is appropriate for physical interpretations, and the close and explicit
relation among them \eqref{eq.pe} allows to choose the most
advantageous one for the task at hand.

\subsection{The conformally symplectic factor}
\label{sec:factor}
Our next task is to find an explicit form for the conformally
symplectic factor $\lambda$ of the Poincar\'e map associated to
\equ{eq.spinxy} or equivalently \equ{eq.spinbg}.

\vskip.1in

By the Jacobi-Liouville Theorem, the determinant of the differential
of the Poincar\'e map is obtained by integrating the trace of the
Jacobian matrix associated to \eqref{eq.spinxy}. Since the determinant
and the trace of a matrix are invariant under a change of basis, their
values are the same if we compute the system \eqref{eq.spinbg} via the
Jacobian with elements $a_{ij}$ given by
\[
 \begin{aligned}
  a _{11}&= 0, & a _{21}&= -2\eps \frac{a}{r(u)}c(\beta;u,\ecc) \ , \\
  a _{12}&= 1, & a _{22}&= \ecc \frac{a}{r(u)} \sin u - \dis \biggl(\frac{a}{r(u)}\biggr)^5\ .
 \end{aligned}
\]
This leads to the following expression for the conformal factor:
\begin{equation}
\label{conformalfactor}
 \lambda = \exp \left( \int _0^{2\pi} \left(\ecc \frac{a}{r(u)}\sin u - \dis
    \left(\frac{a}{r(u)}\right)^5 \right) \, du \right)\ .
\end{equation}

It is remarkable that the integral in \eqref{conformalfactor} can be
computed analytically in terms of $\dis$ and $\ecc$. To this end, we
need the following preliminary result.

\begin{lem}
\label{lem.integral}
 If $\ecc\in [0, 1)$ and $r = a (1 - \ecc \cos u)$, then
 \[
  \int _0^{2\pi} \left(\frac{a}{r(u)}\right)^5 \, du = \pi \frac{3\ecc^4
    +24 \ecc^2 +8}{4 (1 - \ecc^2)^{9/2}}\ .
 \]
\end{lem}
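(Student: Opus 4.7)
Since $r(u)/a = 1-\ecc\cos u$, the claim is equivalent to evaluating
\[
I_5(\ecc)\coloneq \int_0^{2\pi}\frac{du}{(1-\ecc\cos u)^5}.
\]
The plan is to produce a one--parameter recursion that generates $I_n(\ecc)\coloneq \int_0^{2\pi}(1-\ecc\cos u)^{-n}\,du$ for all $n\ge 1$ from the base case $n=1$, and then iterate four times.

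\textbf{Step 1: base case.} I would quote the classical identity $I_1(\ecc)=2\pi/\sqrt{1-\ecc^2}$ (proved either by the Weierstrass substitution $t=\tan(u/2)$ or by residue calculus with $z=e^{\I u}$, observing that the only pole inside the unit circle is the simple root $z_-=(1-\sqrt{1-\ecc^2})/\ecc$ of $\ecc z^2-2z+\ecc$).

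\textbf{Step 2: recursion.} Differentiating under the integral sign (justified since the integrand is smooth in $(u,\ecc)$ for $\ecc\in[0,1)$),
\[
I_n'(\ecc)=n\int_0^{2\pi}\frac{\cos u}{(1-\ecc\cos u)^{n+1}}\,du.
\]
Writing $\cos u=\frac{1}{\ecc}\bigl(1-(1-\ecc\cos u)\bigr)$ inside the integrand gives
\[
I_n'(\ecc)=\frac{n}{\ecc}\bigl(I_{n+1}(\ecc)-I_n(\ecc)\bigr),
\qquad\text{equivalently}\qquad
I_{n+1}(\ecc)=I_n(\ecc)+\frac{\ecc}{n}\,I_n'(\ecc).
\]

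\textbf{Step 3: iterate.} Starting from $I_1=2\pi(1-\ecc^2)^{-1/2}$, applying the recursion four times gives in sequence $I_2=2\pi(1-\ecc^2)^{-3/2}$, $I_3=\pi(2+\ecc^2)(1-\ecc^2)^{-5/2}$, $I_4=\pi(2+3\ecc^2)(1-\ecc^2)^{-7/2}$, and finally $I_5=\pi(8+24\ecc^2+3\ecc^4)/\bigl(4(1-\ecc^2)^{9/2}\bigr)$, which is the desired formula. Each step is a short algebraic simplification: differentiate the previous rational-times-fractional-power expression, multiply by $\ecc/n$, combine with $I_n$ over the common denominator $(1-\ecc^2)^{(2n+1)/2}$, and collect terms.

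\textbf{Main obstacle.} There is no real obstacle; the only care needed is the bookkeeping of polynomial coefficients in $\ecc$ when combining $I_n$ and $(\ecc/n) I_n'$ over a common denominator, but this is purely routine. As an alternative (which I would mention as a check) one may avoid the recursion and compute the residue at $z_-$ of $\frac{-32z^4}{\ecc^5(z-z_+)^5(z-z_-)^5}$, a pole of order $5$; the fourth derivative evaluation produces the same numerator $8+24\ecc^2+3\ecc^4$, providing an independent verification.
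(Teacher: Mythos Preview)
Your proof is correct, but it follows a genuinely different route from the paper's. The paper proceeds directly by the substitution $z=e^{\I u}$ and the Residue Theorem: the integrand becomes $\dfrac{2^5 z^4}{(2z-\ecc z^2-\ecc)^5}\cdot\dfrac{1}{\I z}$, the only pole inside the unit circle is the order-$5$ pole at $\alpha_-=(1-\sqrt{1-\ecc^2})/\ecc$, and a single fourth-derivative residue evaluation yields the stated closed form. Your approach instead builds a one-parameter recursion $I_{n+1}=I_n+\tfrac{\ecc}{n}I_n'$ by differentiating under the integral sign in $\ecc$, and iterates four times from the classical $I_1=2\pi(1-\ecc^2)^{-1/2}$. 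The recursion method is entirely real-analytic (complex analysis is optional even for the base case), produces the intermediate values $I_2,I_3,I_4$ for free, and each step is a short, checkable algebraic simplification; its cost is four rounds of bookkeeping. The paper's residue computation is a one-shot calculation tailored to $n=5$, trading the iteration for a single but slightly heavier fourth-derivative evaluation at $\alpha_-$. Both are perfectly sound; your mention of the residue route as an independent check nicely closes the loop between the two.
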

\begin{proof}
 Since $\ecc \in [0,1)$, then $(\frac{a}{r})^5$ has no poles in the
   unit circle. By the change of variables $z = \exp(\I u)$ and a
   straightforward application of the Residue Theorem, we obtain:
 \[
  \int _0^{2\pi} \biggl(\frac{a}{r(u)} \biggr)^5 \, du = 2 \pi \sum
  _{\substack{ w \text{ sing.} \\ \lvert w \rvert < 1}}
  \Res\biggl(\frac{2^5z^5}{(2z-\ecc z^2-\ecc)^5}, w\biggr)\ ,
 \]
 where $\Res$ denotes the residue of a holomorphic function. To
 compute the residue, we need to evaluate the poles $\alpha _\pm$
 which are given by $\alpha _\pm = \ecc^{-1}(1 \pm \sqrt{1 -
   \ecc^2})$. Since $|\alpha _- | < 1$, then by an explicit
 computation, we get:
\begin{align*}
 \int _0 ^{2\pi} \biggl(\frac{a}{r(u)} \biggr)^5\, du &=
 2 \pi \frac{1}{4!} \lim_{z \to \alpha_-} \frac{d^4}{dz^4}
(\frac{-2^5z^4}{\ecc^5(z-\alpha_+)^5}) \\
 &= 2\pi \lim_{z \to \alpha_-}\frac{32 \left(\alpha_+^4+16 \alpha_+^3 z+36
\alpha_+^2 z^2+16 \alpha_+ z^3+z^4\right)}{\ecc^5 (\alpha_+-z)^9}
\\
 &= \pi \frac{3 \ecc^4+24 \ecc^2+8}{4 \left(1-\ecc^2\right)^{9/2}}. \qedhere
\end{align*}
\end{proof}

The above result leads to the following Corollary, which gives an
explicit form of the conformal factor of the spin-orbit model
described by \eqref{eq.spinxy}.

\begin{cor}
 \label{cor.simplfactor}
 The conformally symplectic factor of the $2\pi$-time map of the
 spin-orbit problem with tidal torque given by the system
 \eqref{eq.spinxy} has the following expression:
 \[
   \lambda = \exp \biggr(-\dis \pi \frac{3 \ecc^4+24 \ecc^2+8}{4
     \left(1-\ecc^2\right)^{9/2}}\biggl)\ .
 \]
\end{cor}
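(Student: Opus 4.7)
The plan is to start directly from the expression \eqref{conformalfactor} for $\lambda$ and split the integral in the exponent into two pieces:
\[
\int_0^{2\pi}\!\ecc\,\frac{a}{r(u)}\sin u\,du\;-\;\dis\int_0^{2\pi}\!\left(\frac{a}{r(u)}\right)^{\!5}du.
\]
The second piece is handled immediately by Lemma~\ref{lem.integral}, which already gives the factor $\pi(3\ecc^4+24\ecc^2+8)/(4(1-\ecc^2)^{9/2})$ that appears in the statement. So the only genuine work is to show that the first piece vanishes.

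For the first integral, I would recognize the integrand as a total derivative. Since $r(u)/a = 1-\ecc\cos u$, we have
\[
\frac{d}{du}\log\!\bigl(1-\ecc\cos u\bigr)=\frac{\ecc\sin u}{1-\ecc\cos u}=\ecc\,\frac{a}{r(u)}\sin u,
\]
so the integrand is exact. Because $1-\ecc\cos u$ is $2\pi$-periodic and strictly positive for $\ecc\in[0,1)$, the primitive $\log(1-\ecc\cos u)$ returns to its initial value at $u=2\pi$, and the integral is zero. (Alternatively, one can invoke parity: the integrand is an odd function of $u$ about $u=\pi$, and the interval $[0,2\pi]$ is symmetric around $\pi$.)

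Combining the two pieces,
\[
\lambda=\exp\!\Bigl(0-\dis\,\pi\,\frac{3\ecc^4+24\ecc^2+8}{4(1-\ecc^2)^{9/2}}\Bigr),
\]
which is exactly the claimed formula. There is no substantive obstacle here: the whole statement is essentially a bookkeeping corollary of Lemma~\ref{lem.integral} together with the elementary observation that the ``$\ecc\sin u$'' term in \eqref{conformalfactor} integrates to zero by exactness. The only thing worth double-checking is the sign conventions, i.e.\ that the formula \eqref{conformalfactor} is written with $-\dis(a/r)^5$ inside (which it is), so that the final exponent comes out negative, consistent with the expected dissipative contraction $\lambda<1$.
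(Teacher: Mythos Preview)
Your proof is correct and matches the paper's intended argument: the paper states the corollary as an immediate consequence of Lemma~\ref{lem.integral} without writing out a separate proof, so the only implicit step is exactly the one you supply, namely that $\int_0^{2\pi}\ecc\,\frac{a}{r(u)}\sin u\,du=0$. Your recognition of the integrand as $\frac{d}{du}\log(1-\ecc\cos u)$ is the cleanest way to see this, and the sign check at the end is appropriate.
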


\begin{rems}
 \begin{enumerate}
\renewcommand*{\theenumi}{\emph{\roman{enumi}}}
\renewcommand*{\labelenumi}{(\theenumi)}
  \item Note that the result in Corollary~\ref{cor.simplfactor} is
    also valid under the change of time given in \eqref{eq.tu}. This
    means that the $2\pi$-time map associated to the system
    \eqref{eq.spinbg} has the same symplectic factor, since the
    determinant is invariant under a change of basis.
  \item The conformally symplectic factor $\lambda$ can be either
    contractive, expansive or neutral. The value $\lambda$ has a clear
    dynamical interpretation: at each $2\pi$-interval of time, the
    values move  $\lambda$ far away from unity. We remark that in
    this work we are interested to the contractive case, namely $\dis
    > 0$.
 \end{enumerate}
\end{rems}



\section{The computation of invariant attractors}
\label{sec:attractors}

We consider the model described by equation \eqref{eq.spinxy} and we
provide an algorithm (see Section~\ref{sec:algorithm}) for the
construction of KAM invariant attractors. The algorithm relies on the
fact that, starting from an initial approximate solution, one can
construct a better approximate solution. A possible choice for the
initial approximate solution is presented in
Section~\ref{sec:approx}. A validation of the goodness of the solution
is considered in Section~\ref{sec.accuracy-tests}, which provides some
accuracy tests.  The construction of the invariant attractors through
the implementation of the algorithm described in
Section~\ref{sec:algorithm} requires some technical procedures,
precisely multiple precision arithmetic (see
Appendix~\ref{sec:multiple}) and parallel computing (see
Section~\ref{sec:parallel}).  Examples of the application of
Algorithm~\ref{alg.newton} are given in
Section~\ref{sec:implementation}.

\subsection{An algorithm for constructing invariant attractors}
\label{sec:algorithm}

Invariant attractors for the map $P_\ecc$ defined in \eqref{eq.pe}
associated to the dissipative spin-orbit equation \equ{eq.spinxy}
can be obtained by implementing an efficient algorithm based on
Newton's method; this algorithm is also used to give a rigorous
proof of invariant tori through KAM theorem, see \cite{CCGL20b},
as well as to give accurate bounds on the breakdown threshold, see
\cite{CCGL20c}.

To introduce the algorithm, we need to fix the frequency $\omg$, that
we are going to choose sufficiently irrational, see
Definition~\ref{def:diophantine}, and we need to introduce invariant
KAM attractors, see Definition~\ref{def:inv}.

\begin{defn} \label{def:diophantine}
The number $\omega\in\R$ is said Diophantine of class $\tau$ and
constant $\nu$ for $\tau\geq 1$, $\nu>0$, and briefly denoted as
$\omega \in \mathcal{D}(\nu, \tau)$, if the following inequality
holds:
\beq{DC} |\omega\ k-q|^{-1}\leq \nu^{-1}|k|^\tau \eeq
for $q\in\Z$, $k\in\Z\backslash\{0\}$.
\end{defn}

We remark that the sets of Diophantine numbers as in
Definition~\ref{def:diophantine} is such that their union over $\nu>0$
has full Lebesgue measure in $\R$.\\

Next, we define as follows an invariant attractor with frequency
$\omg$ satisfying \equ{DC}.

\begin{defn} \label{def:inv}
Let $P _\ecc \colon \M\rightarrow\M$ be a family of conformally
symplectic maps defined on a symplectic manifold $\M \subset \R \times
\T $ and depending on the drift parameter $\ecc$.  A KAM attractor
with frequency $\omega$ is an invariant torus described by an
embedding $K _p \colon \T\rightarrow\M$ and a drift parameter $e_p$,
satisfying the following invariant equation for $\theta\in\T$:
\beq{invariance} P_{\ecc_p}\circ K_p(\theta) = K_p(\theta+\omega)\ .
\eeq
\end{defn}

We will often write \equ{invariance} in the form
\[
P_{\ecc _p}\circ K_p = K_p\circ T_\omg\ ,
\]
where $T_\omg$ denotes the shift function by $\omg$, i.e., $T
_\omg(\theta) = \theta + \omg$.

The solutions of the invariance equation \eqref{invariance} is unique
up to a shift.  For all real $\alpha$, if $\hat K _p(\theta) \coloneq
K _p(\theta + \alpha)$, then $(\hat K _p, \ecc _p)$ is also a solution
of \eqref{invariance}.  Note that all these solutions parameterize the
same geometric object. In \cite{CallejaCL11} there is a simple
argument showing that this is the only source of lack of local
uniqueness.

\begin{rem}
  It is easy to see -- even in the integrable case -- that to obtain
  an attractor with a specific frequency we need to adjust the drift
  parameter.

  Hence, repeating the calculation several times, we obtain the drift
  as a function of the frequency.  One can invert this --
  one-dimensional -- function and obtain the frequency as a function
  of the drift, so that the two are mathematically equivalent, see
  Figure~\ref{fig:rotations}.

  For astronomers, the frequency is directly observed and it is
  natural to think of using the measurements of the frequency to
  obtain values of the drift.

  Theoretical physicists may prefer that the values of the drift are
  known and that one predicts the frequency.

  Both points of view are mathematically equivalent modulo inverting a
  1-D function. In astronomy, since there is little a-priori
  information on the values of the elastic properties of the satellites, it
  seems more natural to study the drift as function of the
  frequency. On other physical applications, where the values of the
  model are known from the start, the other point of view may be
  preferable.

  One small technical problem (that can be solved) is that the
  function is not defined for all values of the frequency. The theory
  only establishes for a set of large measure. Repeatedly, not all the
  values of the drift parameter lead to a system that has a rotational
  attractor.
\end{rem}

\medskip

The starting point of the iterative process in the spin-orbit problem
is then $(K,\ecc)$, an approximate solution of the invariance equation
\equ{invariance},
\begin{equation} \label{approx}
P_\ecc \circ K(\theta) -
K(\theta+\omg)=E(\theta)\ ,
\end{equation}
where the \emph{``error''} $E$ is thought of as small (making precise
the notion of small will require the introduction of norms).

Algorithm~\ref{alg.newton} below takes the pair $(K , \ecc)$ and
produces another approximate solution $(\tilde K,\tilde \ecc)$, which
satisfies \equ{invariance} up to an error whose norm is quadratically
smaller with respect to $E$ (again, making all this precise requires
introducing norms).

We note that all the steps are rather explicit operations taking
derivatives, shifting and performing alebraic operations. The most
delicate steps are \ref{alg.newton-cohom1}, \ref{alg.newton-cohom2},
which involve solving cohomology equations and step
\ref{alg.newton-ls} which involves solving a $2 \times 2$ linear
equation.  The assumption of invertibility of this explicit $2\times
2$ matrix is a non-degeneracy assumption that takes the place of the
classical twist condition.

The Algorithm~\ref{alg.newton} is based on that described in
\cite{CallejaCL11} and adapted for the spin-orbit problem. Even if,
for the sake of simplicity, we only present the algebraic operations
in the recipe, in \cite{CallejaCL11} there are geometric
interpretations that motivate the steps.

\begin{alg}[Newton's method for finding a torus in the spin-orbit problem]
 \label{alg.newton}
 \
 \begin{enumerate}
\setlength{\itemsep}{.8em}
\renewcommand*{\theenumi}{\emph{\arabic{enumi}}}
\renewcommand*{\labelenumi}{\theenumi.}
  \item [$\star$] \texttt{Inputs:} $J$ as in \eqref{matrixJ}, $\omg$ a
    fixed frequency, an initial embedding $K\colon \mathbb{T}
    \rightarrow \mathbb{T} \times \mathbb{R}$, access to the
    $2\pi$-time flow map $G _\ecc$ of \eqref{eq.spinbg} for fixed
    values $\eps$ and $\dis$, change of coordinates depending on
    $\ecc$, $\Psi _\ecc \equiv 2\pi \left(
  \begin{smallmatrix}
   1 & 0 \\ 0 & 1-\ecc
  \end{smallmatrix}
\right)$, and conformally symplectic map $P _\ecc \equiv \Psi^{-1}
_\ecc \circ G _\ecc \circ \Psi _\ecc $.
  \item [$\star$] \texttt{Output:} New $K$ and $\ecc$ satisfying the
    invariance equation \eqref{invariance} up to a given tolerance.
  \item [$\star$] \texttt{Notation:} If $A$ is defined in
    $\mathbb{T}$, $\overline A \coloneq \int _{\mathbb{T}} A$ and $A^0
    \coloneq A - \overline A$.
  \item \label{alg.newton-error} $E \gets P _ \ecc \circ K - K \circ T
    _\omg$, \newline $E _1 \gets E _1 - \mathtt{round}(E _1)$.
  \item $\alpha \gets D K$.
  \item \label{alg.newton-N} $N \gets (\alpha ^t \alpha) ^{-1}$.
  \item \label{alg.newton-M} $M \gets
  \begin{bmatrix}
   \alpha & J ^{-1} \alpha N
  \end{bmatrix}$.
  \item $\widetilde E \gets (M ^{-1}\circ T _\omg) E$.
  \item $\lambda$ from the Corollary~\ref{cor.simplfactor}.
  \item \label{alg.newton-PSA} $P \gets \alpha N$, \\ $S \gets (P
    \circ T _\omg)^t D P _ \ecc \circ K J ^{-1} P$, \\ $\widetilde A
    \gets M ^{-1} \circ T _\omg D _\ecc P _\ecc \circ K$.
  \item \label{alg.newton-cohom1} $(B _a)^0$ solving $\lambda (B _a)
    ^0 - (B _a)^0 \circ T _ \omg = - (\widetilde E _2)^0$, \\ $(B
    _b)^0$ solving $\lambda (B _b) ^0 - (B _b)^0 \circ T _ \omg = -
    (\widetilde A _2)^0$.
  \item \label{alg.newton-ls} Find $\overline W _2$, $\sigma$ solving
    the linear system
  \begin{equation*}
   \begin{pmatrix}
    \overline {S} & \overline{S(B _b)^0} +
\overline{\widetilde A _1} \\
    \lambda - 1 &  \overline{\widetilde A _2}
   \end{pmatrix}
   \begin{pmatrix}
     \overline W _2 \\ \sigma
   \end{pmatrix} =
   \begin{pmatrix}
     - \overline{\widetilde E _1} - \overline{S(B _a)^0} \\
     - \overline{\widetilde E _2}
   \end{pmatrix}\ .
  \end{equation*}
  \item $(W _2)^0 \gets (B _a)^0 + \sigma (B _b)^0$.
  \item $W _2 \gets (W _2)^0 + \overline{W}_2$.
  \item \label{alg.newton-cohom2} $(W _1)^0$ solving $(W _1) ^0 - (W
    _1)^0 \circ T _ \omg = - (S W _2)^0 - (\widetilde E _1)^0 -
    (\widetilde A _1)^0 \sigma$.
  \item \label{alg.newton-correction} $K \gets K + M W$, \\
  $\ecc \gets \ecc + \sigma$.
 \end{enumerate}
\end{alg}
Algorithm~\ref{alg.newton} needs some practical remarks:
\begin{itemize}
 \item Because of the periodicity condition $K (\theta + 1) = K
   (\theta ) + \left(
 \begin{smallmatrix}
  1 \\ 0
 \end{smallmatrix} \right)$, one can always define the periodic map
 $\widetilde K(\theta) \coloneq K(\theta) - \left(
 \begin{smallmatrix}
  \theta \\ 0
 \end{smallmatrix} \right)$ which, generically, admits a Fourier series.
 Then, one obtains that
 \[
  K\circ T _\omg = \widetilde K \circ T _\omg + \left(
  \begin{smallmatrix}
   \omg \\ 0
  \end{smallmatrix} \right).
 \]
 \item The function $E$ in step \ref{alg.newton-error} must perform
   the subtraction in the first component in $\mathbb{T}$. For a
   numerical implementation that can be fulfilled by the assignment $E
   _ 1 \gets E _1 - \round(E _1)$, where $\round$ returns the nearest
   integer value of its argument. Such a function is commonly provided
   in almost all programming languages.
 \item The matrix $M$ in step \ref{alg.newton-M} is unimodular, which
   allows to get an easy inverse matrix expression.
 \item The quantities $D P _\ecc$ and $D _\ecc P _\ecc$ in step
   \ref{alg.newton-PSA} are needed to compute the directional
   variational flow of $\Phi$, which can be done automatically using
   the explanations in Section~\ref{sec.vars}.
 \item The stopping criterion is either that $\|E\|$ or $\max\{ \| MW
   \|, | \sigma |\}$ is smaller than a prefixed tolerance.
\end{itemize}
A common numerical representation for periodic mappings is a Fourier
series in the inputs $K_1$ and $K _2$, which gives us a representation
of $K\equiv (K _1, K _2)$.  In such a representation, we can use the
Fourier transform, or its numerical version via Fast Fourier Transform
(FFT) algorithm.

Therefore any periodic mapping, say $f$, admits two representations,
namely in points (or table of values) and in Fourier coefficients. The
first one is just the values $(\widecheck f _k) _{k = 0}^{n-1}$ of the
mapping in an equispaced mesh in $[0,1)$ of size $n$. The second one
  is obtained by the Inverse of the Fast Fourier Transform (IFFT),
  denoted by $(\widehat f _k) _{k = 0}^{n-1}$.  Notice that, because
  the function is assumed to be real-valued, the two representations
  can have the same size, i.e. $n$ real values.

Depending on the step in Algorithm~\ref{alg.newton}, it may be better
to use one representation or the other.  For instance, $P _\ecc \circ
K$ in \ref{alg.newton-error}, and $D P _\ecc \circ K$, $D _\ecc P
_\ecc \circ K$ in \ref{alg.newton-PSA} are better when $K$ is in a
table of values, although an ODE version of \eqref{eq.spinxy} in terms
of Fourier coefficients can be considered.

On the other hand, the composition with $ T _\omg$ and the solution of
the cohomological equations in \ref{alg.newton-cohom1} and
\ref{alg.newton-cohom2} are easier if $K$ is in Fourier
series. Indeed, these equations can be solved in Fourier coefficients,
using the following result whose proof is straightforward.

\begin{lem}
\label{lem.cohoms}
 Let $\eta(\theta) = \sum _k \eta _k \exp(2\pi \I k \cdot \theta)$ and
 let $\omg $ be irrational. Then:
 \begin{enumerate}
  \renewcommand{\theenumi}{\roman{enumi}}
  \renewcommand{\labelenumi}{(\theenumi)}
  \item If $\eta _0 = 0$, then $\phi \circ T _\omg - \phi = \eta$
    has solution $\phi(\theta) = \sum _k \phi _k \exp(2\pi \I k \cdot
    \theta)$ with
   \[
    \phi _k =
    \begin{cases}
     \frac{\eta _k}{\exp(2\pi \I k \cdot \omg) - 1} & \text{if } k \ne 0, \\
     0 & \text{otherwise.}
    \end{cases}
   \]
  \item If $\lambda$ is not a root of unit, then $\phi \circ T _\omg
    - \lambda \phi = \eta$ has solution with coefficients
   \[
    \phi _k = \frac{\eta _k}{\exp(2\pi \I k \cdot \omg) - \lambda} \ .
   \]
 \end{enumerate}
\end{lem}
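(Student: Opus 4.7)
The plan is to solve both cohomological equations by working directly in the Fourier basis, matching coefficients, and verifying that the resulting denominators are non-zero under the stated hypotheses.

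First I would write an ansatz $\phi(\theta) = \sum_k \phi_k \exp(2\pi \I k \cdot \theta)$ for the unknown $\phi$ and use the fact that the shift $T_\omg$ acts diagonally in this basis: explicitly, $\phi \circ T_\omg (\theta) = \sum_k \phi_k \exp(2\pi \I k \cdot \omg)\exp(2\pi \I k \cdot \theta)$. Substituting into $\phi \circ T_\omg - \lambda \phi = \eta$ and equating Fourier coefficients gives the family of scalar equations
\[
\bigl(\exp(2\pi \I k \cdot \omg) - \lambda\bigr)\, \phi_k = \eta_k
\]
for each $k \in \Z$. For part (i) we have $\lambda = 1$, and for part (ii) we have general $\lambda$.

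For part (ii), the key point is that if $\lambda$ is not a root of unity, then $\exp(2\pi \I k \cdot \omg) \neq \lambda$ for every $k \in \Z$: indeed, if equality held then $|\lambda| = 1$ and $\lambda = \exp(2\pi \I k \cdot \omg)$, and raising to any non-zero integer power $m$ would not yield $1$ because $k \cdot \omg$ is irrational for $k \ne 0$ (and for $k=0$ we would need $\lambda = 1$, excluded). Hence the denominator $\exp(2\pi \I k \cdot \omg) - \lambda$ never vanishes and we can solve for every $\phi_k$ by the stated formula. For part (i), the denominator $\exp(2\pi \I k \cdot \omg) - 1$ vanishes only at $k = 0$, since $\omg$ is irrational; the compatibility condition at $k=0$ reads $0 \cdot \phi_0 = \eta_0$, which forces exactly the assumption $\eta_0 = 0$, and leaves $\phi_0$ free (we set it to $0$ for definiteness). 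The remaining $\phi_k$ are given by the stated formula.

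The only remaining subtlety is convergence of the Fourier series for $\phi$, which is why the statement is called ``straightforward'' and presented as a formal identity in the paper: if $\eta$ is merely an $L^2$ function, the solution formulas define $\phi$ as a formal series, and additional Diophantine-type hypotheses on $\omg$ (as in Definition~\ref{def:diophantine}) combined with smoothness of $\eta$ are needed to control the small-divisor growth $|\exp(2\pi \I k \cdot \omg) - 1|^{-1} \lesssim |k|^\tau$ and ensure $\phi$ lies in the appropriate function space. Since the lemma is stated as a formal identity matching coefficients (and the full quantitative estimates are handled in the cited KAM references), the main obstacle is essentially notational: one just verifies term-by-term that the two series agree, which is immediate from the diagonal action of $T_\omg$ in Fourier. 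No further argument is needed. \qed
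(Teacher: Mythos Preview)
Your approach---expanding in Fourier series and matching coefficients---is exactly the ``straightforward'' argument the paper has in mind (the paper gives no proof beyond that remark), and part (i) is handled correctly.

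There is, however, a logical slip in your justification for part (ii). You assume $\exp(2\pi\I k\cdot\omg)=\lambda$ and then argue that $\lambda^m\ne 1$ for all $m\ne 0$; but this shows that such a $\lambda$ is \emph{not} a root of unity, which is the hypothesis, not a contradiction. In fact the hypothesis ``$\lambda$ is not a root of unity'' alone does \emph{not} prevent $\lambda=\exp(2\pi\I k_0\omg)$ for some $k_0\ne 0$ (since $k_0\omg$ is irrational, such a $\lambda$ is indeed not a root of unity). So the lemma as literally stated needs a slightly stronger hypothesis, e.g.\ $|\lambda|\ne 1$, or $\lambda\notin\{\exp(2\pi\I k\omg):k\in\Z\}$. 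In the paper's application this is a non-issue: the conformal factor $\lambda$ from Corollary~\ref{cor.simplfactor} is real with $0<\lambda<1$, so $|\lambda|\ne 1$ and the denominators $\exp(2\pi\I k\cdot\omg)-\lambda$ are trivially nonzero. Your discussion of convergence and small divisors is appropriate and matches the spirit of the paper.
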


\subsection{Initial approximation of the invariant curve}
\label{sec:approx}


Repeated application of the Newton's method from
Algorithm~\ref{alg.newton} produces a very accurate solution provided
that one can get a good enough initial approximation.  In this section
we address the problem of producing such an initial
approximation. Although the methods are rather general, we are going
to give the results for the cases of study in this paper. We select
the following two frequencies with good Diophantine conditions
belonging to the class $\mathcal{D}( \frac{2}{3-\sqrt{5}}, 1)$, see
Definition~\ref{def:diophantine}:
\begin{align}
 \label{eq.omggold}
  \omg _1 &= \gamma _g^+
  \intertext{and}
 \label{eq.omg}
  \omg _2 &= 1 + \frac{1}{2 + \gamma _g^-} \ ,
\end{align}
where we define $\gamma _g^\pm = \frac{\sqrt{5}\pm 1}{2}$.

One method to provide an initial approximation is, of course, to
continue from an integrable case that can be solved
explicitly. Another method is to do an easy calculation of an
approximate torus. Since the system is dissipative, the torus, if it
exists, will be an attractor.

\subsubsection{A continuation method from the integrable case}
\label{sec:integrable}
Let us start to analyze the averaged problem for which we look for the
drift starting from the integrable case.  Fix the frequency $\omg$ and
set $\eps=0$ in \equ{eq.avg-spinxy}; then, the drift $\ecc$ can be
chosen so that
\begin{equation*}
\frac{\bar N(\ecc)}{\bar L(\ecc)} = \omg\ .
\end{equation*}
The above equation provides the eccentricity as a function of the
frequency.  In fact, as noticed in \cite{ARMA}, for $\dis \neq 0$ the
solution of \equ{eq.avg-spinxy} can be written as
\[
x(t) = x(0) + \frac{\bar N(\ecc)}{\bar L(\ecc)} t + \frac{1-\exp({-\dis
    t})}{\dis}\ \biggl(\dot x(0) - \frac{\bar N(\ecc)}{\bar L(\ecc)}\biggr)\ ,
\]
which shows that $\dot x=\frac{\bar N(\ecc)}{\bar L(\ecc)}$ is a
global attractor for the unperturbed, purely dissipative case
$\eps=0$. Setting $y=\dot x$, we can select as initial starting
condition
\begin{equation}
 \label{eq.starting-iter-point}
 x(0) = 0 \qquad \text{and} \qquad y(0)={{\bar N(\ecc)}\over {\bar L(\ecc)}}\ .
\end{equation}
We remark that, despite the use of the averaged version of the
spin-orbit problem given in \eqref{eq.avg-spinxy}, to approximate an
initial guess of the eccentricity in terms of the frequency, we can
also use the non-averaged spin-orbit problem \eqref{eq.spinxy} to
provide an approximated eccentricity, see Section~\ref{sec:direct}.

\subsubsection{Direct iteration} \label{sec:direct}
We propose a method different from Section~\ref{sec:integrable}, which
takes advantage of the fact that the torus, if it exists, is an
attractor.  We do not need to consider the average equation and we can
start from the model \equ{eq.spinxy}.  Hence, we start by selecting a
set of points at random; after a transient number of iterations (e.g.,
choose the transient as the inverse of the dissipation multiplied by a
convenient safety factor), we expect that the orbit is close to the
attractor. Then, we assess whether indeed this orbit has a rotation
number.

Since not all the attractors of dissipative maps are rotational
orbits, not all of them should have a rotation number.  Of course,
there may be situations where the orbit is a chaotic attractor that
happens to have a rotation number.

Given the importance of the rotation number, there are quite a number
algorithms to compute it, e.g.  \cite{LaskarFC92, Athanassopoulos1998,
  Laskar99, AlsedaLM2000, Laskar99, SearaV06, GomezMS10a, GomezMS10b,
  Simo2010}.  We have used the method in \cite{Sander2017} which
speeds the convergence to the rotation number. We will present more
details of the calculation in \cite{CCGL20b}. We note that the method
in \cite{Sander2017} gives a very good indication of the existence of
an invariant circle. In \cite{Sander2017} it is shown that if there is
a smooth invariant circle, the convergence of the method to a rotation
number is very fast. Hence the fast convergence of the algorithm is a
reasonably good evidence of the existence of a rotational torus.  Of
course, the convergence of the Newton's method started in this guess
is a much stronger validation of the correctness of the guess.

Figure~\ref{fig:rotations} compares the two approaches explained here
and in Section~\ref{sec:integrable}. The first one is straightforward,
since it consists in plotting the function $\bar N(\ecc)/\bar L(\ecc)$
in terms of the eccentricity $\ecc$. The second one requires a little
bit more effort, since it needs to numerically integrate
\eqref{eq.spinxy} to compute the rotation number.

\begin{figure}[ht]
  \label{fig:rotations}
 \input{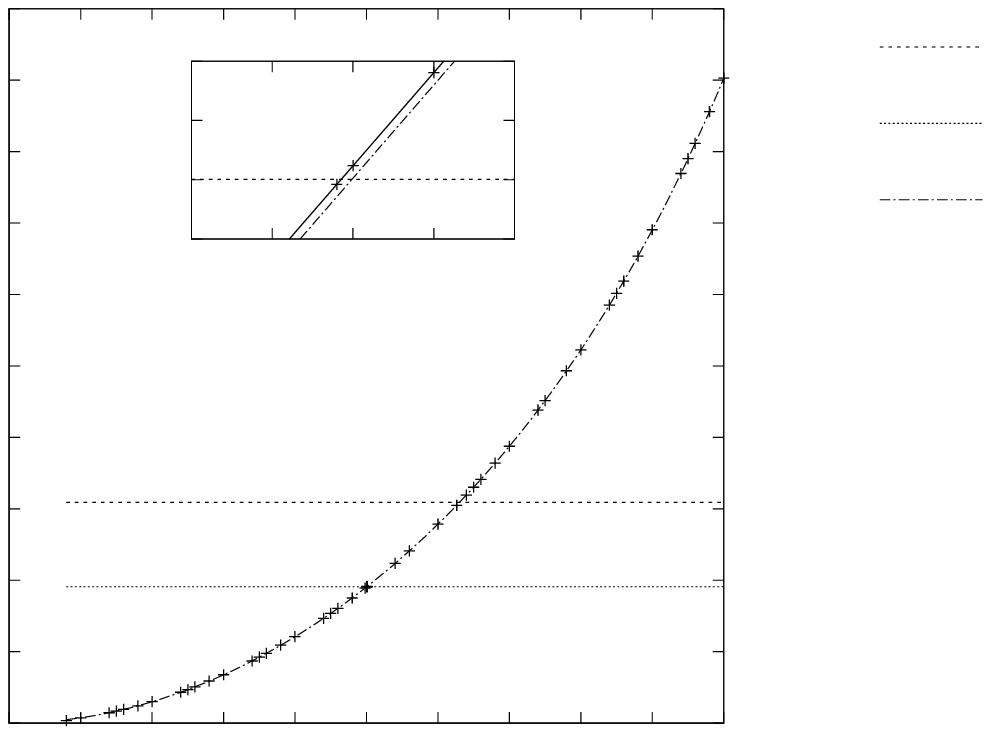}
\hglue-.4cm
\vglue-.4cm
 \caption{Eccentricity versus the rotation number denoted by
   \texttt{+} of the system \eqref{eq.spinxy} with $\eps = 10^{-4}$
   and $\dis=10^{-5}$. Frequencies as in \eqref{eq.omggold} and
   \eqref{eq.omg}. The small window is just a zoom-in near to $\omg
   _2$ which shows that the averaged quantity $\bar N(\ecc) /\bar
   L(\ecc)$ approaches the non-averaged rotation number.}
 \label{fig:rotations}
\end{figure}

\subsubsection{Initial approximation for the embedding}

Once we have fixed an initial guess for the eccentricity and an
initial starting point, we can proceed to get an initial guess of the
embedding $K$ which is needed as input in the
Algorithm~\ref{alg.newton}. To this end, we first perform a
preliminary transient of iterations of $G _\ecc$, defined in
\eqref{eq.qe}, at the initial point $\Psi _\ecc (x(0), y(0)/(2\pi))$,
with $(x(0),y(0))$ given in \eqref{eq.starting-iter-point}. After
that, we can follow the steps in Algorithm~\ref{alg.inv-curve} to get
the initial guess for $K$ for the Algorithm~\ref{alg.newton}.

\begin{alg}[Invariant curve approximation]
\label{alg.inv-curve}
 \
 \begin{enumerate}
\setlength{\itemsep}{.8em}
\renewcommand*{\theenumi}{\emph{\arabic{enumi}}}
\renewcommand*{\labelenumi}{\theenumi.}
  \item [$\star$] \texttt{Inputs:} Points $(\beta _k, \gamma
    _k)_{k=0}^{n-1}\subset [0,2\pi) \times \mathbb{R}$ from iteration
    of the $2\pi$-time Poincar\'e map of \eqref{eq.spinbg} with
    parameter values $\eps, \dis$, and $\ecc$ and number of Lagrange
    interpolation points $2j\in \mathbb{N}$.
  \item [$\star$] \texttt{Output:} Initial embedding $K$ for
    Algorithm~\ref{alg.newton} with a given mesh size $n _\theta$.
  \item Sort $(\beta _{i _k}, \gamma _{i _k}) _{k =0}^{n-1}$ such that
    $\beta _{i _ 1} \leq \dotsb \leq \beta _{i _n}$.
  \item Mesh $\overline \beta _k = 2\pi k / n _\theta$ with $k = 0,
    \dotsc, n _\theta-1$.
  \item For each $k = 0, \dotsc, n _\theta - 1$, let $\overline \gamma
    _k$ be the Lagrange interpolation centered at $(\beta _{i
    _k},\gamma _{i _k})$ with $2j$ points $\gamma _{i _{k}-j \pmod
    n},\dotsc, \gamma _{i _ {k}+j \pmod n}$ and their respective
    abscissae.
  \item Return the table of values $K \equiv (\Psi _\ecc^{-1}
    (\overline \beta _k, \overline \gamma _k)) _{k=0}^{n_\theta -1}$
    with $\Psi _\ecc$ from \eqref{eq.xy2bg}.
 \end{enumerate}
\end{alg}

\subsection{Accuracy tests}\label{sec.accuracy-tests}
We are now looking for $K \colon \mathbb{T} \rightarrow \mathbb{T}
\times \mathbb{R}$ and the parameter $\ecc$ so that the invariance
equation \eqref{invariance} is verified, for $P _\ecc$ given in
\eqref{eq.pe} and a fixed frequency $\omg$ like in \eqref{eq.omggold}
or \eqref{eq.omg}.

In this process, we have three main sources of error that affect the
result.
\begin{enumerate}
\renewcommand*{\theenumi}{\bf E\arabic{enumi}}
\renewcommand*{\labelenumi}{\theenumi.}
\setlength{\itemsep}{.5em}
 \item \label{err1} The error of the invariance condition on the table
   of values.  This error is controlled by the Newton's procedure.
 \item \label{err2} The error on the integration. This error is
   controlled by the numerical integrator when we request absolute and
   relative tolerances.
 \item \label{err3} The error in the discretization.  To control this
   source of errors we need first to estimate it, and then to be able
   to change the mesh when the error is too large.
\end{enumerate}
Let us address the error coming from \eqref{err3}. Let $\mathcal{A}
\subset \mathbb{T}$ be the set of points corresponding to the table of
values used in the algorithm. In the case of a Fourier representation
of size $n _\theta$, then $\mathcal{A} = \{k / n_\theta \}_{k =
  0}^{n_\theta -1}$ is an equispaced mesh of $[0,1)$. After some
  iterations of the Newton's Algorithm~\ref{alg.newton}, we obtain a
  set of values $\{K (\theta _i)\} _{\theta _i \in \mathcal{A}}$ and
  an eccentricity $\ecc$ satisfying the invariance equation in a mesh
\begin{equation}
\label{eq.errinv}
 \max _{\theta _i \in \mathcal{A}} \| P _{\ecc} \circ K (\theta _i) -
 K (\theta _i + \omg) \| < \delta\ ,
\end{equation}
where $\delta$ is a fixed tolerance, e.g. $\approx 10^{-11}$ in double
precision. Note that we are not fixing the norm in \eqref{eq.errinv}
which typically can be the sup-norm, the analytic norm, etc.

Let us now define $\delta ^\ast$ as
\begin{equation*}
 \delta^\ast = \max _{\theta \in \mathbb{T}} \| P _{\ecc} \circ K
 (\theta ) - K (\theta + \omg) \|\ .
\end{equation*}
The computation of $\delta ^\ast$ is in general difficult and we
suggest two standard heuristic alternatives.

The first option is very fast: it consists in looking at the norm of
some of the ``last'' Fourier coefficients and using it as an estimate
for the truncation error of the series. Once the Newton's iteration
has converged on a given mesh, we check the size of these
coefficients. If one of them is larger than a prescribed threshold, we
assume that the interpolation error is too big, and we increase the
number of Fourier modes in the direction of these large coefficients.

The second option is to evaluate the error in \eqref{eq.errinv} on a
set of values $\widetilde {\mathcal{A}} \subset \mathbb{T}$ different
from $\mathcal{A}$.  One can use a thinner set $\widetilde
{\mathcal{A}} $ to produce a better estimate of the invariance $\delta
^\ast$. This procedure can be computationally expensive. An easier
alternative is to consider $\widetilde {\mathcal{A}} $ with the same
number of points as $\mathcal{A}$. For instance,
\begin{equation*}
 \widetilde {\mathcal{A}}  = \mathcal{A} + \upsilon
\end{equation*}
with $\upsilon$ equal to one half of the distance between points of
$\mathcal{A}$ in the direction $\theta _i \in \mathcal{A}$. In the
Fourier case, $\widetilde {\mathcal{A}} = \{(k + 0.5) / n _\theta \}
_{k = 0}^{n _\theta - 1}$ should be enough.

Hence, we have a new mesh $\widetilde {\mathcal{A}} $ which is
interlaced with the initial mesh $\mathcal{A}$. Then, we check that
\begin{equation}
\label{eq.errinv2}
 \max _{\theta _i \in \widetilde {\mathcal{A}} } \| P _{\ecc} \circ K
 (\theta _i) - K (\theta _i + \omg) \| < \delta\ .
\end{equation}
If this test is not satisfied, we add more Fourier coefficients and we
go back to the Newton's iteration given by the
Algorithm~\ref{alg.newton}.  If the test is satisfied, we can either
stop and accept the solution or check it again with a thinner mesh.

The key is then to avoid checking with thinner meshes during the
computation as much as possible, because it is too costly, and to do
just a single check at the end to ensure the accuracy.

\subsection{Implementation of the algorithm}

Once Algorithm~\ref{alg.newton} is coded so that it becomes a sequence
of arithmetic operations (and transcendental functions), it is almost
easy to make it run in extended precision arithmetic, see
Appendix~\ref{sec:multiple}.

There are, however some caveats:
\begin{itemize}
\item One loses the hardware support.
\item The hardware optimized libraries have to be replaced by hand
  coded libraries.  Notably, one cannot use BLAS, LAPACK, or FFTW and
  they have to be substituted by explicit algorithms. We have used our
  own implementations.
\item In iterative processes, one has to choose the stopping criteria
  appropriately. As standard, one writes the stopping criteria as a
  power of \emph{Machine epsilon}.
\item For us, the most important point is that, in order to achieve
  high accuracy of the ODE integration with a reasonable step in a
  reasonable amount of time, one needs a high order method.

We have used the Taylor's method, which is based on computing the
Taylor's expansion of the solution of the equation to a very high
order, see Appendix~\ref{sec.taylorint}.

The paper \cite{JorbaZ2005} presents a very general purpose generator
of Taylor's integrators based on Automatic Differentiation. If one
specifies (in a very simple format) a differential equation, the
program \emph{taylor} (supplied and documented in \cite{JorbaZ2005})
generates automatically an efficient Taylor solver written in C.  The
user can select whether this Taylor solver uses standard arithmetic or
extended precision arithmetic (either {\tt GMP} or {\tt MPFR}). It is
important that Taylor's methods can work well with different versions
of the arithmetic.

One important product of the Taylor's integrator \cite{Gimeno2021} is
that we obtain very efficient solvers of the variational equations.
We will report on them in Appendix~\ref{sec:variational}, since they
are a natural extension of the Taylor's integrator.  We note that we
will not use them in the numerical experiments of this paper, but we
will use them in \cite{CCGL20b}. We remark that in mission design,
they appear in the method of \emph{differential corrections}.
  \end{itemize}

\subsubsection{Using profilers to detect bottlenecks}
\label{sec:profiler}
The computation of invariant tori with the Newton method that we
propose is remarkably fast when the initial guess is good
enough. However, the continuation to the breakdown requires to
increase the Fourier modes and then the computational cost will
increase in proportion.

The key step in a continuation process is the correction of the
solution for the new parameter values are being continued. In our
case, it is the Newton step. We have used a C profiler in a single
continuation step with multiprecision of 55 digits to realize which
are the most CPU-time consuming parts. We did it for different values
of $\eps \in \{10^{-4}, 2\cdot 10^{-4}\}$, $\dis \in \{10^{-3},
10^{-6}\}$, $\omg\in \{\omg _1, \omg _2\}$, and $N \in \{128,256\}$
number of Fourier modes getting, in all of them, similar results.

In average around 98.1\% was dedicated to the Newton step correction,
inside this step around 97.6\% was for the evaluation of the ODE (and
its variationals) in the ODE integrator and the correction of the
integration stepsize. Inside of it, around 31\% was for the addition
of jet transport elements, 25\% for multiplication, 18\% for
assignments, and 6.5\% for scalar multiplications.

As consequence, we conclude that the FFT, the solvers of cohomological
equations and the shiftings in Algorithm~\ref{alg.newton} are
irrelevant in terms of CPU-time as well as the memory allocation. The
second conclusion is that the ODE integration is the crucial
part. This fact will be exploited and detailed in
Section~\ref{sec:parallel}.

\subsection{Parallelization}
\label{sec:parallel}

There are several operations in the Algorithm~\ref{alg.newton} that
are fully independent to each other, such as those steps which are
done in a table of values of $\theta$ and the solution of the
cohomological equations using the Lemma~\ref{lem.cohoms}.

The use of a profiler shows that the main bottleneck, in terms of
CPU-time usage, is the ODE integration involved in $P _\ecc$, given in
\eqref{eq.pe}, and its first order directional derivatives.

A simple concurrent parallelization for each of the different
numerical integrations (previously ensuring that there is non-shared
memory between the threads) shows an evident speed-up with respect to
non-concurrent versions. In our case, we run the code with
multiprecision arithmetic, in particular with \texttt{MPFR}, and we
must be sure that each of the parallelized parts work correctly with
the multiprecision. In the case ot \texttt{MPFR} we must initialize
the precision and the rounding mode for each of the different CPU's.

Figure~\ref{fig.tor-ben} shows the non-parallel execution times and
the speed-ups of Algorithm~\ref{alg.newton} using the initial guess
from Algorithm~\ref{alg.inv-curve} with $\eta = 10^{-6}$,
$\eps=10^{-4}$, 135 digits of precision and different number of modes
in the Fourier representation. The figure was done in an Intel Xeon
Gold 5220 CPU at 2.20GHz with 18 CPUs with hyperthreading which
simulates 36 CPUs.

Note that the non-parallel data in Figure~\ref{fig.tor-ben} is
extremely well fit by
\[
T = 0.430018N + 4.75918\ ,
\]
where $T$ denotes the CPU time in seconds and $N$ the Fourier
representation size. This means that the single core time scales
behave (in practice) linearly with the size of the problem. The
logarithmic correction appearing in the theory of the FFT does not
seem to be observable which is due to the fact that the FFT is not the
main problem in the performance of our method. Of course, this is
significantly better than the $N^3$ of Newton's methods based on
inverting matrices.

Algorithm~\ref{alg.newton} also accepts other concurrent computations
such as the FFT algorithm or the solution of the cohomology equations
in steps \ref{alg.newton-cohom1} and \ref{alg.newton-cohom2}. The
latter did not show an important speed up, presumably because the time
spent in these calculations is not so important overall (the number of
points needed is not so large, due to our reduction to 1-D).

\begin{figure}[ht]
 \input{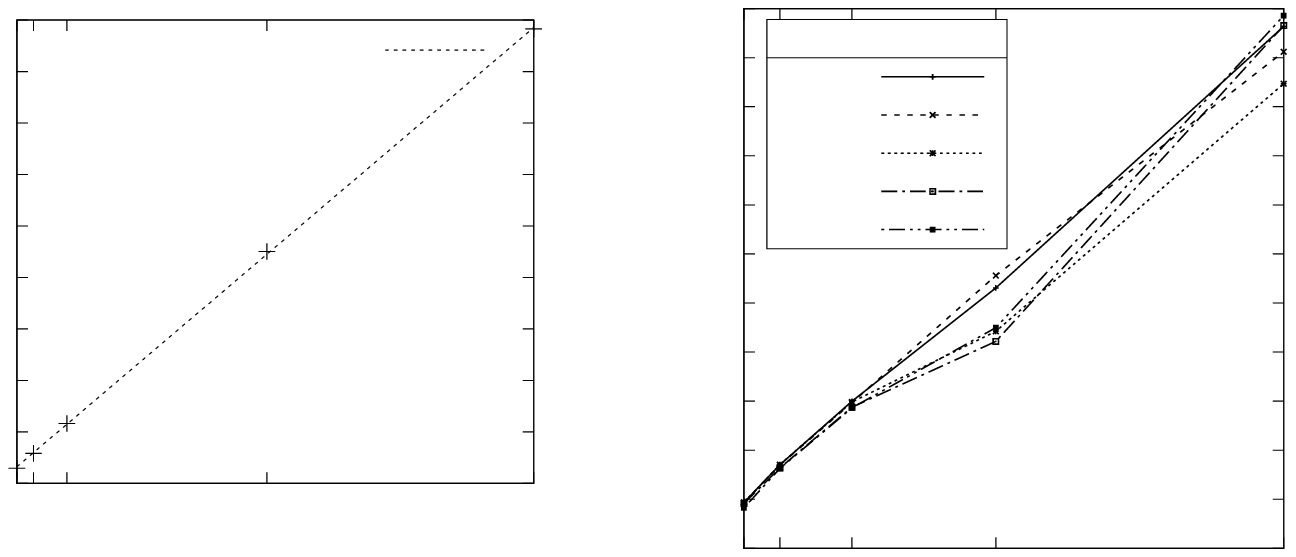}
\hglue-.4cm
\vglue-.4cm
 \caption{On the left: timings not using parallelization and fit of
   the times changing the number of Fourier modes.  On the right:
   Speed-up by a parallel ODE integration in step \ref{alg.newton-PSA}
   of Algorithm~\ref{alg.newton} for different number of threads and
   different number of Fourier modes.}
 \label{fig.tor-ben}
\end{figure}

\subsection{A practical implementation of Algorithm~\ref{alg.newton}}
\label{sec:implementation}

As an example of the implementation of Algorithm~\ref{alg.newton}, we
provide in Figure~\ref{fig.plot} the construction of the invariant
attractors for \equ{eq.spinxy} with a small dissipation, say
$\dis=10^{-6}$, and frequencies given by \equ{eq.omggold} and
\equ{eq.omg}. Figure~\ref{fig.plot2} gives the results for a higher
dissipation, namely $\dis=10^{-3}$.

Both figures show different invariant attractors when the perturbative
parameter $\eps$ changes by using a standard continuation procedure
with interpolation. At each of these continuation steps with respect
to $\eps$ we apply Algorithm~\ref{alg.newton} with a tolerance
$10^{-35}$, which refines the embedding $K_\eps$ and the eccentricity
$\ecc_\eps$, and we also check the accuracy, see
Section~\ref{sec.accuracy-tests}, to ensure that the numerical
solution is accurate enough.

Additionally, we perform an extra refinement at each continuation step
to ensure that the plots in Figures \ref{fig.plot} and \ref{fig.plot2}
make the value of $x$ in the range $[0,2\pi)$. More precisely, if
  $(\tilde K _\eps, \ecc _\eps)$ is the output of
  Algorithm~\ref{alg.newton}, then we assign $K _\eps \gets \tilde K
  _\eps $ with $K _\eps^1(0) = 0$. That is, we apply a shift $\alpha$
  on $\tilde K _\eps$ with $\alpha $ so that the first component of
  $\tilde K _\eps$ at $0$ is zero.

\begin{figure}[ht]
  \input{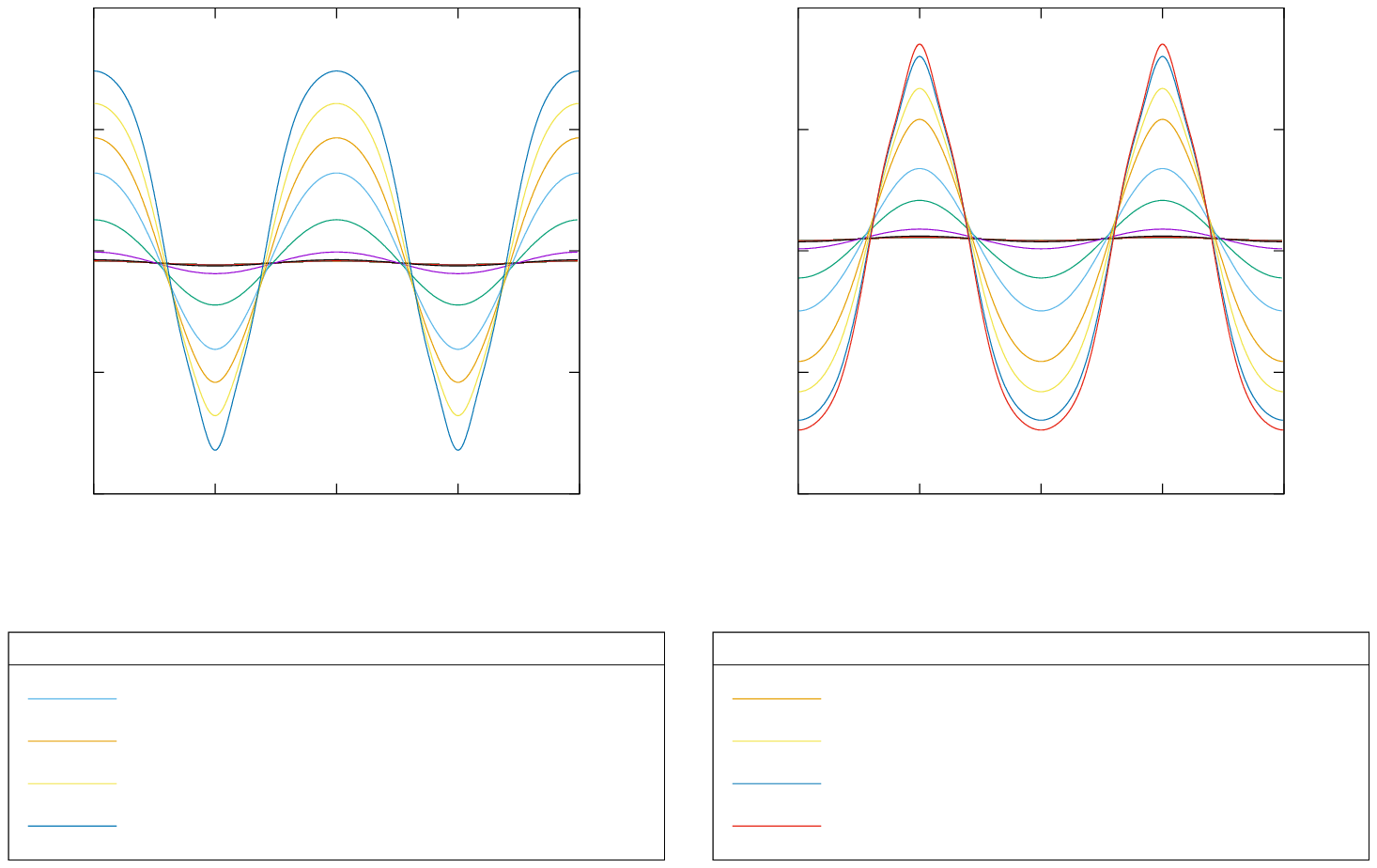}
 \caption{Invariant attractor of \equ{eq.spinxy} with a fixed
   dissipation $\dis=10^{-6}$ after a continuation w.r.t. the
   perturbative parameter $\eps$. On the left, the frequency $\omg$ is
   like in \equ{eq.omg} and on the right as in \equ{eq.omggold}.}
 \label{fig.plot}
\end{figure}

\begin{figure}[ht]
  \input{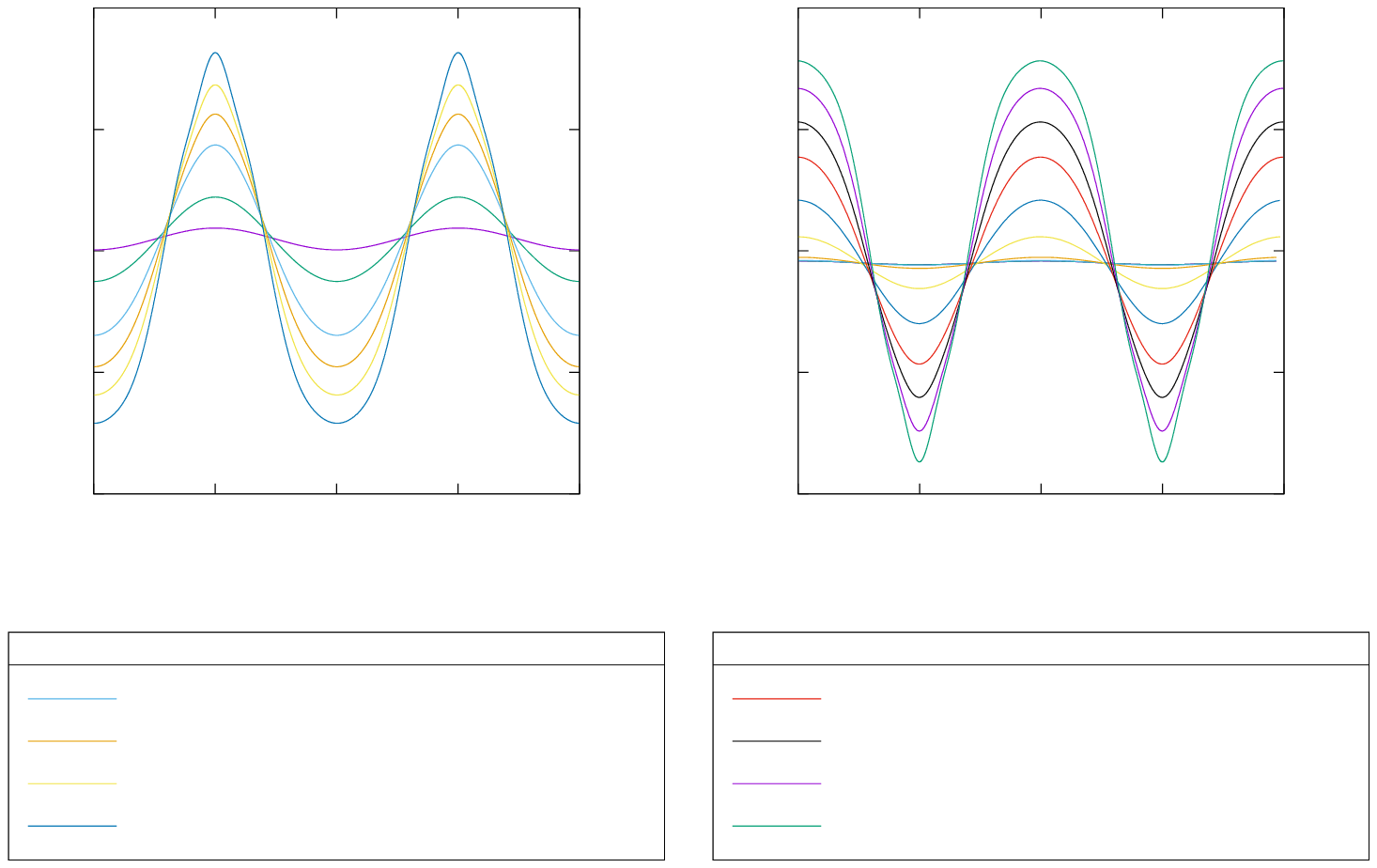}
 \caption{Invariant attractor of \equ{eq.spinxy} with a fixed
   dissipation $\dis=10^{-3}$ after a continuation w.r.t. the
   perturbative parameter $\eps$. On the left, the frequency $\omg$ is
   like in \equ{eq.omg} and on the right as in \equ{eq.omggold}.}
 \label{fig.plot2}
\end{figure}

The results shown in Figure~\ref{fig.plot} give evidence of the
effectiveness of the method of constructing invariant attractors,
using the reduction to a map and the implementation of
Algorithm~\ref{alg.newton}.

\subsection{An empirical comparison between the
  model with time dependent friction \equ{eq.spinxy} and the
  average friction \eqref{eq.avg-spinxy}}

In this section we present a comparison between the numerical results
in the model with time-dependent dissipation \equ{eq.spinxy} and the
model with the averaged dissipation \equ{eq.avg-spinxy}. In
Appendix~\ref{sec:averaging} we present two rigorous justifications of
the averaging procedure.

In Figure~\ref{fig.avnonav}, we plot the difference between the drift
parameters (namely the eccentricities) of the full and averaged models
for the tori with frequencies $\omg _1$ and $\omg _2$; such difference
is small, say of the order of $10^{-7}$, even for parameter values
close to breakdown. We also note that the $y$-axis in
Figure~\ref{fig.avnonav} is just the difference (without absolute
value) which means that $e > e _{avg}$ at each of the $\eps$ values.

\begin{figure}[ht]
  \input{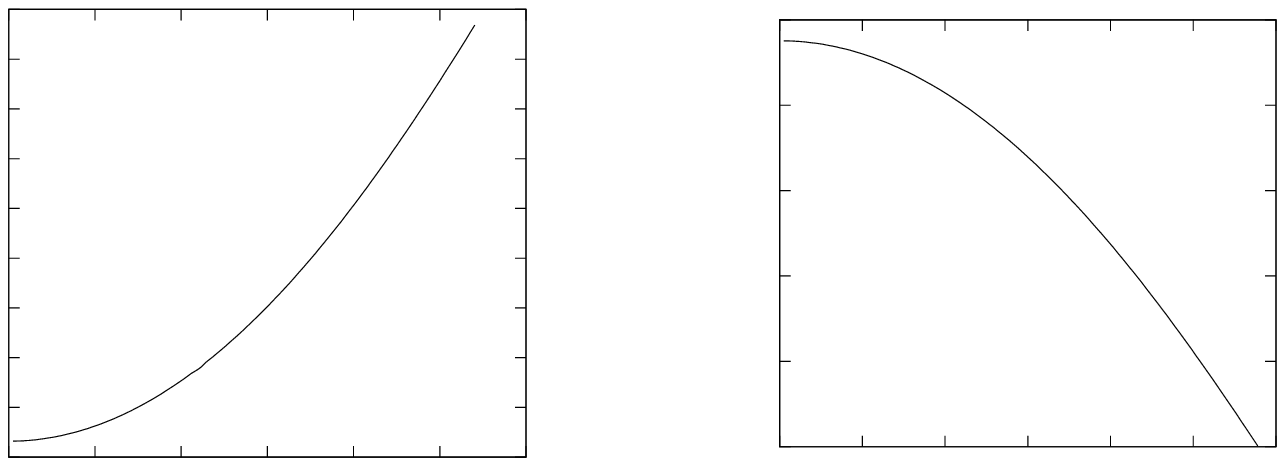}
 \caption{Difference of the eccentricities of the tori using the
   averaged \eqref{eq.avg-spinxy} and the non-averaged
   \eqref{eq.spinxy} spin-orbit model in the case of dissipation
   $\dis=10^{-6}$ and for the two frequencies of interest $\omg _1$ in
   \equ{eq.omggold} on the left and $\omg _2$ in \equ{eq.omg} on the
   right.}
 \label{fig.avnonav}
\end{figure}

\subsection{Comment for double precision accuracy}
The computation with multiprecision is already fast enough thanks to the
quadratic convergence of our method. The use of multiprecision allows
to get tori of arbitrary accuracy and, in particular, it helps to
reach values of the parameter close to the breakdown. These
calculations can be used to identify mathematical patterns close to
breakdown and to use KAM theory.  However, we can also perform double
or even float precision computations for values that are not close to the
breakdown. As one expects, the required time to converge using the
Newton process presented in this paper, see
Algorithm~\ref{alg.newton}, is smaller than in multiprecision, paying
the price of accuracy.

The first reason for this speed is because we request less accuracy
--around $10^{-14}$ for double precision-- which makes the algorithm
converge in fewer iterations.  The second reason is that we avoid
the possible overhead of using a software package, such as
\texttt{MPFR}, and we can then exploit the hardware optimizations
provided by compilers.

After adapting our code to run with double precision we detect that
the values of $\eps$ for which our method still converges is around
$7.8\times 10^{-3}$ which is far from the $\eps \sim 10^{-2}$ using
more accuracy, the speed-up of the parallelization strategy, see
\ref{sec:parallel}, behaves similarly, and the time of a continuation
step (with one CPU) decreases. It still depends on the number of
Fourier modes and some example of values are reported in
Table~\ref{tab.double}.
\begin{table}[ht]
 \[
  \begin{array}{c|*{6}{c}}
    N & 64 & 128 & 256 & 512 & 1024 & 2048 \\ \hline 
    T & 0.570 & 1.132 & 2.280 & 4.618 & 9.329 & 18.049
   \end{array}   
 \]
   \caption{Example of time $T$ (in seconds) of a continuation step
     using double precision, $N$ Fourier modes, no parallelism, $\eta
     = 10^{-3}$, $\omg$ as in \eqref{eq.omggold}, Newton's tolerance
     $10^{-14}$, and integration tolerance $10^{-16}$.
     The data fit extremely well with $T =  0.00884 N + 0.0561$.}
   \label{tab.double}
\end{table}

In certain problems, it may be worth optimizing the speed (at the
price of accuracy and programming time). In such cases where speed is
the most important consideration, it may be worth taking advantage of
modern computer architectures such as GPU's.  (the very structured
nature of our algorithm makes it a tantalizing possibility).  We  
encourage these (or any other) developments on the
methods presented here.


\section{Conclusions}\label{sec:conclusions}

There are several important features of the method developed in this
paper for the construction of invariant tori using the return map; we
highlight below some of these features.

\begin{itemize}
\item The method only requires dealing with functions of a low number
  of variables.
\item The iterative step to construct the tori is quadratically
  convergent.
\item The operation count and the operation requirements are low.
\item The most costly step (integration of the equations) is very easy
  to parallelize efficiently. Many other steps of the algorithm
  involve vector operations, Fourier transforms, which also can take
  advantage of modern computers but here with less impact in the
  performance.
\item The method is well adapted to the rather anisotropic regularity
  of KAM tori and, in the smooth direction, takes advantage of the
  developments in integration of ODE's.
\item The method is reliable, since it is backed by rigorous
  \emph{a-posteriori} theorems.
\item The condition numbers required are evaluated in the approximate
  solution and do not involve global assumptions on the flow such as
  twist.
\item From the theoretical point of view, the a-posteriori theorems
  give a justification of other heuristic methods that produce
  approximate solutions including asymptotic expansions and averaging
  methods.
\end{itemize}

\appendix

\section{Justification of averaging methods}\label{sec:averaging}

In this section, we study the relation between the time dependent
spin-orbit model \eqref{eq.spinxy} and the model \eqref{eq.avg} in
which the dissipation is averaged.

Superficially, the relation between the two models and their
attractors seems to be problematic even in the perturbative regime,
since the existence of quasi-periodic solutions makes assertions for
all the time and conventional averaging methods make assertions only
for times which are inverse powers of the perturbation.

We will present two arguments showing that, indeed for the problem of
the existence of quasi-periodic tori, the averaging method produces an
accurate result.  The first argument (Section~\ref{sec:averaging1}
will apply to very general models, but will produce results for KAM
tori. The second method (Section~\ref{sec:averaging2} will be very
specific for the spin-orbit problem but will provide information for
all orbits.

We note, however, that our method produces not only a rigorous
estimate on the error of the averaging method, but also suggests other
approximations that are more accurate than the usual procedure of
averaging only the dissipation (see Section~\ref{sec:averaging2}).
Our results show that, if besides averaging the dissipation, one
changes slightly the conservative forces (we give explicit formulas),
one gets results that are more accurate than just averaging the
friction.

In this section we will consider a differential equation of the form
\begin{equation}\label{generalform}
  \frac{d^2 x(t)}{dt^2}  +  a(t) \frac{d x(t)}{dt} + F(x(t),t) =
  0\ .
\end{equation}

In the perturbative case, we will have that there is a small parameter
in the time dependence $a(t) = \mu \alpha(t)$.

\subsection{Perturbative arguments based on a-posteriori theorems}
\label{sec:averaging1}

We first observe that the key to our result is the study of the
time-one map of the vector field.

If the return maps of two vector fields are close (in a smooth enough
norm) and they satisfy the non-degeneracy conditions, the solutions of
the invariance equation \eqref{invariance} for the two systems are
close.

To detail this remark, let us consider two Poincar\'e maps $P_e^{(A)}$
and $P_e^{(C)}$, depending on a drift term $e$ and satisfying some
non-degeneracy conditions; for example, we can take $P_e^{(A)}$ as the
Poincar\'e map of the averaged system and $P_e^{(C)}$ as the
Poincar\'e map of the complete, non-averaged system.

Let $(K_A,e_A)$ be an approximate solution of the invariance equation
as in \equ{approx} with a small error term $E_A$:
\[
P_{e_A}^{(A)}\circ K_A(\theta)-K_A(\theta+\omg)=E_A(\theta)\ .
\]
Assume that the maps $P_e^{(C)}$, $P_e^{(A)}$ are close (in a
suitable norm); then $(K_A,e_A)$ is an approximate solution for
$P_e^{(C)}$; in fact, we have that
\beqano
P_{e_A}^{(C)}\circ
K_A(\theta)-K_A(\theta+\omg)&=&(P_{e_A}^{(C)}-P_{e_A}^{(A)})\circ
K_A(\theta)+P_{e_A}^{(A)}\circ
K_A(\theta)-K_A(\theta+\omg)\nonumber\\ &=&(P_{e_A}^{(C)}-P_{e_A}^{(A)})\circ
K_A(\theta)+E_A(\theta)\equiv E_C(\theta)
\eeqano
with $E_C$ small. Using the KAM theory given in
\cite{CallejaCdlL2013}, there exists $(K_C,e_C)$ such that
\[
P_{e_C}^{(C)}\circ K_C(\theta)-K_C(\theta+\omg)=0\ ;
\]
besides, the results in \cite{CallejaCdlL2013} show that $K_C$ is
close to $K_A$ and $e_C$ is close to $e_A$.

The importance of this remark is that to obtain the distance of the
maps, we only need to estimate the difference between the evolution
for a time $2 \pi$. In the perturbative regime, when the oscillation
is of size $\mu$ controlling the distance between the solution of the
averaged and non-averaged systems is well within the reach of standard
averaging methods \cite{Hale, SandersV2007} (which deal well with
times $O(\mu^{-1})$, much larger than $2 \pi$).


\subsection{Non-perturbative arguments based on elementary
scaling of variables and of time} \label{sec:averaging2}

In this section, we present a completely elementary (non perturbative)
technique to justify the averaging method.

We just observe that if $x$ satisfes \eqref{generalform} then, for any
non-zero (smooth) function $\gamma(t)$, the function $y(t)$ defined by
$x(t) = \gamma(t)y(t)$ satisfies:
\begin{equation}\label{yequation}
  \gamma(t)   \frac{d^2 y(t)}{dt^2}
  +( 2 \gamma'(t)  + a(t) \gamma(t) )  \frac{d y(t)}{dt}
  + (\gamma''(t) + a(t) \gamma'(t) ) y(t)  + F(\gamma(t) y(t),t) =
  0\ .
\end{equation}
Let $\bar{a}$ be the average value of $a(t)$; if we choose
\begin{equation}\label{goodgamma}
\gamma(t) = \exp \biggl( -\frac{1}{2} \int_0^t ( a(s) - \bar{a}) \, ds
\biggr)\ ,
\end{equation}
which is a periodic function, the equation~\eqref{yequation} becomes:
\begin{equation}\label{averaged}
  \frac{d^2 y(t)}{dt^2}
  +\bar{a}  \frac{d y(t)}{dt} + G(y(t),t)  = 0\ ,
\end{equation}
where
\begin{equation} \label{Gfunction}
 G(y,t) \coloneq
 \frac{ \gamma''(t) + a(t) \gamma'(t)}{\gamma(t)}
  y  + {1\over {\gamma(t)}} F(\gamma(t) y,t)\ .
\end{equation}

Hence, the function $y$ satisfies the equation with an average
dissipation (and a different $F$).  Notice that $\gamma(0) = \gamma(2
\pi) = 1$, so that
\begin{equation} \label{change}
  \begin{split}
    x (0) & = y(0) \gamma(0)\\
    x(2 \pi) & = y(2 \pi) \gamma(2\pi)\\
y'(0) & = x'(0) + \frac{1}{2}(a(0) - \bar{a}) x(0)\\
y'(2\pi) & = x'(2\pi) + \frac{1}{2}(a(2\pi) - \bar{a}) x(2\pi)\ .
\end{split}
\end{equation}

We can think of \eqref{change} as a change of variables in phase space
from $(x,x')$ to $(y, y')$.  The return map for the averaged equation
\eqref{averaged} in the variables $(y,y')$ is equivalent to the
orginal problem.

Hence, we can read off the original return map as the return map of
the averaged equation under a change of variables.

Note that, as standard in the averaging method, the relation between
the averaged equations and the true ones is mainly a change of
variables and a modification to the equations. Note, that in our very
simple equations, the equivalence is exact.

In the perturbative case, we obtain that $ \gamma = 1 + O(\eps)$ and
that the function $G$ in \eqref{Gfunction} satisfies $G - F =
O(\eps)$; also the change of variables between $(x,x')$ and $(y, y')$
is $O(\eps)$ close to the identity.

Our treatment shows that models with average dissipation should
involve also a change of variables and a modification of the forces.

\section{Taylor's integration methods}
\label{sec.taylorint}
Taylor's method is one of the most common numerical integration
techniques of an initial value problem of an ordinary differential
equation of the form
\begin{equation}
\label{eq.ivp}
 \begin{split}
   \dot z &= F(z,t), \\
   z(t _0) &= z _0\ .
 \end{split}
\end{equation}
The Taylor's method is competitive, in speed and accuracy, with
respect to other standards methods. The main drawback is that the
Taylor's method is an explicit method, so it has all limitations of
these kind of schemes, for example it is non-appropriate for stiff
systems.

The idea behind the Taylor's method is very simple. Given the initial
condition $z (t _0) = z _0$, the value $z (t _1)$, with $t _1 = t _0 +
h$, is approximated by the Taylor series of $z(t)$ at $t = t _0$. The
Taylor series is truncated up to an order, say $N$, to try to ensure
the absolute/relative tolerances requested during the numerical
integration. Therefore, to get the solution $z_1$ at time $t=t_1$ from
the solution $z_0$ at time $t=t_0$ we consider the expression
\begin{equation}
\label{eq.z1}
 z_1=z_1+z_0^{[1]}h+z_0^{[2]}h^2+ \dotsb +z_0^{[N]}h^N\ ,
\end{equation}
where $z_0^{[k]}$, $k=1, \dotsc ,N$, represents the normalized
derivative at order $k$ computed at $t_0$, i.e.,
\[
z_0^{[k]}={1\over {k!}}\ {{d^kz}\over {dt^k}}(t_0)\ .
\]
Using the coefficients of the Taylor's expansion in \eqref{eq.z1} one
can estimate the range of $h$ where the Taylor series is valid (up to
a tolerance). This fact makes the Taylor's method suitable for
multi-precision arithmetic.

The computation of the derivatives might be a difficult task, which
can be lightened by using automatic differentiation (see, for
instance, \cite{RallC1996,GriewankW2008}), thus providing very
efficient implementations of Taylor's method as illustrated in
\cite{JZ05} to which we refer for full details. We recall that
automatic differentiation provides a recursive computation of
operations on polynomials, which implies the manipulation of formal
power series.

We also mention that jet transport (see Appendix~\ref{sec:jet}),
namely automatic differentiation with respect to initial data and
parameters, can be used in Taylor's method to approximate the high
order variational flow as it has been proved in \cite{Gimeno2021}.

\section{Variational equations for the spin-orbit problem}
\label{sec:variational}

In this section we provide the formulae for the computation of the
variational equations with respect to coordinates and parameters,
motivated by the fact that - even if not used in the present paper -
they are useful in different contexts, like the parameterization of
invariant objects, estimates based on derivatives of the flow,
computation of chaos indicators.

 \subsection{Variational equations}
 \label{sec.vars}
The variation with respect to the initial conditions of
\eqref{eq.spinbg} involves the Jacobian whose elements are given by
\begin{equation}
 \label{eq.bjacob}
 \begin{aligned}
  a _{11}&= 0, & a _{21}&= -2\eps \frac{a}{r(u;\ecc)}c(\beta;u,\ecc), \\
  a _{12}&= 1, & a _{22}&= \ecc \frac{a}{r(u;\ecc)} \sin u - \dis
  \left(\frac{a}{r(u;\ecc)}\right)^5
 \end{aligned}
\end{equation}
with $c(\beta;u,\ecc)$ defined in \eqref{eq.cx}.

The variations with respect to the initial conditions for the system
\eqref{eq.spinbg} or for the system \eqref{eq.spinxy} are not the
same, although some properties such as the determinant or the
eigenvalues of the $2\pi$-time map are preserved. The explicit
relation between the two variations is given by
\begin{equation}
 \label{eq.varb2x}
 \begin{aligned}
  \frac{\partial x}{\partial x _0}(t) &= \frac{\partial
    \beta}{\partial \beta _0}(u), & \frac{\partial x}{\partial y
    _0}(t) &= \frac{\partial \beta}{\partial \gamma _0}(u) (1 - \ecc
  \cos u _0) , \\
  \frac{\partial y}{\partial x _0}(t) &= \frac{\partial
    \gamma}{\partial \beta _0}(u) \frac{1}{1 - \ecc \cos u}, &
  \frac{\partial y}{\partial y _0}(t) &= \frac{\partial
    \gamma}{\partial \gamma _0}(u) \frac{1 -\ecc \cos u _0}{1 - \ecc
    \cos u}\ ,
 \end{aligned}
\end{equation}
where $(x _0, y _0 ) = (\beta _0, \gamma _0/(1 - \ecc \cos u
_0))$. The relation \eqref{eq.varb2x} must be interpreted as follows:
after the integration of \eqref{eq.spinbg} and its first variational
equations, which uses the terms in \eqref{eq.bjacob}, from time $u _0$
to $u$ and with initial condition $(\beta _0, \gamma _0) \in [0, 2\pi)
  \times \mathbb{R}$, then the variation with respect to the initial
  condition $(x _0, y _0)$ in \eqref{eq.spinxy} from the initial time
  $t _0 = u _0-\ecc \sin u _0$ to the final time $t = u - \ecc \sin u$
  is given by the relations in \eqref{eq.varb2x}.\\ Note that
  \eqref{eq.varb2x} is simplified when $u _0 = 0$ and $u = 2\pi$.

\subsection{Variational equations with respect to the parameters}
The variational equations with respect to the parameters $\eps$ and
$\dis$ of the system \eqref{eq.spinbg} are quite straightforward as
well as their relations in terms of the variables $(x,y)$. They are
given by the following expressions:
\begin{equation}
\label{eq.varparamb2x}
 \frac{\partial x}{\partial \bigstar} (t) = \frac{\partial
   \beta}{\partial \bigstar}(u) \text{ and } \frac{\partial
   y}{\partial \bigstar} (t) = \frac{\partial \gamma}{\partial
   \bigstar}(u) \frac{a}{r}, \qquad \bigstar \in \{\eps, \dis\}\ .
\end{equation}
However, the case for the parameter $\ecc$ in \eqref{eq.spinbg}
requires a little bit more of work and its relation with respect to
the coordinates $(x,y)$ also involves more terms which we make
explicit below:
\begin{equation}
 \label{eq.vareccb2x}
 \begin{split}
  \frac{\partial x}{\partial \ecc}(t) &=
  \begin{aligned}[t]
  &\frac{\partial \beta}{\partial \ecc}(u) - \frac{\partial \beta
    }{\partial \gamma _0}(u) \frac{\gamma _0 \cos u _0}{1 - \ecc \cos
      u _0} + \gamma(u)\frac{a}{r} \sin u\\ & + \bigg[\frac{\partial
        \beta}{\partial u _0}(u) + \frac{\partial \beta}{\partial
        \gamma _0}(u) \frac{\gamma _0 \ecc \sin u _0}{1 - \ecc \cos u
        _0}\biggr]\frac{\sin u _0}{1 - \ecc \cos u _0}\ ,
  \end{aligned} \\
  \frac{\partial y}{\partial \ecc}(t) &=
  \begin{aligned}[t]
   &\frac{\partial \gamma}{\partial \ecc}(u) \frac{a}{r} +
    \gamma(u)(\frac{a}{r})^2\cos u - \frac{\partial \gamma}{\partial
      \gamma _0}(u) \frac{a}{r} \frac{\gamma _0 \cos u _0}{1 - \ecc
      \cos u _0}\\ & + \biggl[ \frac{\partial \gamma }{\partial u}(u)
      - \gamma (u) \frac{a}{r}\ecc \sin u \biggr](\frac{a}{r})^2 \sin
    u \\ & +\biggl[ \frac{\partial \gamma }{\partial u _0}(u) +
      \frac{\partial \gamma}{\partial \gamma _0}(u) \frac{a}{r}
      \frac{\gamma _0 \ecc \sin u _0}{1 - \ecc \cos u _0} \biggr]
    \frac{\sin u _0}{1 - \ecc \cos u _0}\ .
  \end{aligned}
 \end{split}
\end{equation}

Note that \eqref{eq.vareccb2x} is simplified when $u _0 = 0$ and $u =
2\pi$.

\subsection{High order variational equations}
In many cases the first order variational equations are
straightforward and one can explicitly write them down in the
numerical integrator.  However, high order variational equations are
cumbersome and the use of jet transport becomes highly recommended,
see \cite{Gimeno2021}.  The jet transport is also useful in the study
of other structures such as stable manifolds (which will not be
considered here).

Jet transport uses automatic differentiation (see
\cite{GriewankW2008}), which manipulates multivariate polynomials to
carry out the truncated Taylor's approximation containing (in the case
of jet transport) the higher order variational flow, see
\cite{Gimeno2021} for a precise formulation. We also refer to
Appendix~\ref{sec.taylorint} for a discussion of a polynomial
manipulator up to degree $2$.

In the case of the spin-orbit problem given by \eqref{eq.spinbg}, the
polynomial manipulator must at least contain the sum, product, sine,
cosine, and power operations.  All of them have explicit recurrence
expressions (\cite{Knuth97,Haro2016}).  By the use of the polynomial
manipulator, expressions such as \eqref{eq.vareccb2x} for higher
orders are automatically obtained.

\section{Different models of computer arithmetic}
\label{sec:multiple}

The Algorithm~\ref{alg.newton} can be implemented with multi precision
arithmetic.  The idea is that there are different models of computer
arithmetic implementation. It is very easy to switch between different
models using features of modern languages such as overloading.

We recall that computers deal only with representable numbers, which
are just a (finite) subset of the real numbers and whose elements have
the form
\begin{equation}
 \label{eq.floatingnum}
 \pm m \cdot \beta ^{e-t}\ ,
\end{equation}
where $m \in [0, \beta ^{t}-1]$ is an integer called mantissa, $\beta$
is the base or radix (typically $\beta = 2$), $t$ is a positive
integer denoting the precision and $s\in [s_{min}, s_{max}]$ is also
an integer called the exponent.  Typical values in the double
precision arithmetic following the IEEE 754 standard\footnote{We omit
  the discussion of \emph{``denormalized numbers''}, perhaps the
  aspect of IEEE 754 that generated the most controversy.} are $\beta
= 2$, $t=53$, $s _{min} = -1021$, and $s _{max} = 1024$.

Performing arithmetic operations on representable numbers, very often
yields numbers that are not representable. Sometimes, the results are
in the middle of representable numbers and then, one assigns the
result to one of the neighboring numbers.  This is called
\emph{rounding}. There are several rules that are in common use:
rounding to nearest, rounding down, rounding up, rounding towards
zero, rounding away to zero. There are operations that yield a number
that is far from any representable number (e.g., adding the largest
representable number to itself). One usually represents those as
\emph{Inf}. Of course, one needs to have rules on how to deal with
\emph{Inf} (e.g., does one distinguish between positive/negative
infinity).  Finally, there operations that do not make sense, such as
division by zero.

In the IEEE 754 standard, it is specified that there is a so called
\emph{control word}. The bits of the control word specify the rounding
modes and how to treat infinity.

Finally, we just note that, when the result of two numbers is very
close to zero, rounding leads to a great loss of precision. IEEE 754
has introduced also the \emph{denormalized numbers} which make the
gaps near zero smaller, even if they require specialized rules to be
handled.

We also note that the IEEE 754 standard specifies the calculations of
some transcendental functions subject to the same rules of rounding.

Nowadays, the IEEE 754 standard is implemented in hardware. Both in
CPU's and in many GPU's. Many important libraries (BLAS/ATLAS, FFTW3)
take advantage of the availability of hardware and obtain advantages
in speed.  Having very reliable rounding modes that satisfy identities
allows to improve also the accuracy. For example, there are algorithms
that sum a sequence of numbers with a roundoff error independent of
the length of the sequence to be summed. One can also use
\emph{interval arithmetic} that provides rigorous estimates of the
arithmetic operations.

Nowadays, there exist libraries that provide the same capabilities
indicated above (representable numbers of the standard form, rounding
modes, etc.), but with a number of digits that can be selected at run
time; for example, {\tt MPFR} \cite{FousseHLPZ2007} (which is the one
we have used) or some other.

Using modern programming techniques such as overloading, it is not
difficult to write versions of our programs for the arithmetic using
the hardware or in {\tt MPFR} and select the precision.

It is important to remark that in a parallel scenario, one must be
sure that the global variables used in these libraries are initialized
in each of the different threads. In particular, in the {\tt MPFR}
case, one needs to initialize the precision and the rounding for each
of the threads, otherwise the output will differ from the non-parallel
version.

\section{A goodness test for jet transport}\label{sec:jet}
Let $\dot x = f(t,x)$ an ODE in $\mathbb{R}^d$ with flow denoted by
$\varphi(t; x)$. The test consists in running three integrations with
two different integrators.

First, we run $x _0 + s$ with an integrator with jet transport of
order, say $N$, up to a time, say $1$. The output is the jet $y(s)$ of
order $N$.

Second, we choose a unitary vector $v$ and a scalar $h$ (typically
small, say $h=10^{-7}$) and we define the quantity
\begin{equation*}
 c _h = \| y(h) - \varphi(1; x _0 + hv) \|\ ,
\end{equation*}
which is expected to behave as $c _h \approx c h ^{N+1}$ for some $c >
0$.

The third and last run is to repeat the second one, but now with $h/2$
to get the quantity $c _{h/2}$.

Finally, it must happen that
\begin{equation}
 \label{eq.ninjaquo}
 \frac{c _h}{c _{h/2}} \approx 2 ^{N+1}.
\end{equation}
Therefore, the test is successful when $\log _2(c _{h}/c _{h/2})
\approx N+1$, being $N$ the order of jets in the first integration.

Notice that \eqref{eq.ninjaquo} may suffer loss of precision if $h$ is
too small. Therefore, one needs to choose a suitable $h$ by
systematically trying several choices.


\newcommand{\etalchar}[1]{$^{#1}$}
\def\cprime{$'$} \def\cprime{$'$} \def\cprime{$'$} \def\cprime{$'$}

\end{document}